\theoremstyle{definition}
\newtheorem{theorem}{Theorem}[section]
\newtheorem{lemma}[theorem]{Lemma}
\newtheorem{corollary}[theorem]{Corollary}
\newtheorem{proposition}[theorem]{Proposition}
\newtheorem{definition}[theorem]{Definition}
\newtheorem{remark}[theorem]{Remark}
\newtheorem{construction}[theorem]{Construction}
\newcommand{\stquot}[1]{{[}#1{]}}
\newcommand{\RR}{\mathbb{R}}
\newcommand{\Sub}[1]{\ensuremath{\text{Sub}_{#1}}}
\newcommand{\Mone}{\overline{M}_{1, 2}}
\newcommand{\DR}{\text{DR}_\Gamma}
\newcommand{\trop}[1]{#1^{trop}}
\newcommand{\GS}[1]{\overline{M}_{g, n}^\dagger(#1)}
\newcommand{\GSG}[2]{\overline{M}_{#2}^\dagger(#1)}
\newcommand{\msout}[1]{\text{\sout{\ensuremath{#1}}}}
\newcommand{\ZZ}{\mathbb{Z}}
\newcommand{\CC}{\mathbb{C}}
\newcommand{\NN}{{\mathbb{N}}}
\newcommand{\QQ}{{\mathbb{Q}}}
\newcommand{\OO}{\mathcal{O}}
\newcommand{\Aff}{{\mathbb{A}}}
\newcommand{\PP}{\mathbb{P}}
\newcommand{\GG}{\mathbb{G}}
\newcommand{\Nl}[1]{{N_{#1}^{\ell}}}
\newcommand{\Spec}{{\text{Spec}\:}}
\newcommand{\Hom}{\text{Hom}}
\newcommand{\gp}[1]{{#1^{gp}}}
\newcommand{\vir}[1]{{[#1]^{vir}}}
\newcommand{\lvir}[1]{{[#1]^{\ell vir}}}
\newcommand{\cal}[1]{\mathcal{#1}}
\newcommand{\lccx}[1]{\mathbb{L}^{\ell}_{#1}}
\newcommand{\Cl}[1]{{C_{#1}^\ell}}
\newcommand{\Clstrict}[1]{{C_{#1}^{\msout{\ell}}}}
\newcommand{\C}[1]{C_{#1}}
\newcommand{\DNC}[1]{{\widetilde{M}_{#1}}}
\newcommand{\Tl}[1]{{T^{\ell}_{#1}}}
\newcommand{\Log}{{\mathcal{L}}}
\newcommand{\lpb}{{\arrow[dr, phantom, very near start, "\ulcorner \ell"]}}
\newcommand{\lpbstrict}{{\arrow[dr, phantom, very near start, "\ulcorner \msout{\ell}"]}}
\newcommand{\pb}{{\arrow[dr, phantom, very near start, "\ulcorner"]}}
\newcommand{\Ms}{{\overline{M}_{g, n}}}
\newcommand{\Mspecific}[1]{\overline{M}_{#1}}
\newcommand{\Mprel}{{\mathfrak{M}_{g, n}}}
\newcommand{\colim}{\text{colim}}
\newcommand{\QCoh}{\text{QCoh}}
\newcommand{\Coh}{\text{Coh}}
\renewcommand{\tilde}[1]{\ensuremath{\widetilde{#1}}}
\renewcommand{\frak}[1]{\ensuremath{\mathfrak{#1}}}
\newcommand{\onlyinsubfile}[1]{#1}
\newcommand{\notinsubfile}[1]{}
\newcommand{\gys}[1]{#1^\dagger}
\definecolor{sebgreen1}{rgb}{0.019,0.317,0.149}
\definecolor{sebgreen2}{rgb}{0.784,0.952,0.780}
\newcommand{\AF}[1]{\mathscr{A}_{#1}}
\newcommand{\HH}{K_\circ}
\newcommand{\HHl}{K_\dagger}
\title{{The Log Product Formula in quantum $K$-theory}}
\author{You-Cheng Chou, Leo Herr, Yuan-Pin Lee}
\date{\today}
\begin{document}

\email{chou@math.utah.edu}

\email{herr@math.utah.edu}

\email{yplee@math.utah.edu}

\address{Institute of Mathematics, Academia Sinica, Taipei 10617, Taiwan, and
Department of Mathematics, University of Utah, 	Salt Lake City, Utah 84112-0090, U.S.A.}

\maketitle

\renewcommand{\onlyinsubfile}[1]{}
\renewcommand{\notinsubfile}[1]{#1}

\setcounter{section}{-1}

\begin{abstract}
    We prove a formula expressing the $K$-theoretic log Gromov-Witten invariants of a product of log smooth varieties $V \times W$ in terms of the invariants of $V$ and $W$. 
    The proof requires introducing log virtual fundamental classes in $K$-theory and verifying their various functorial properties. We introduce a log version of $K$-theory and prove the formula there as well. 
\end{abstract}

\section{Introduction}

The present paper proves a \emph{log product formula} for \emph{quantum $K$-theory}, a $K$-theoretic version of Gromov-Witten theory. We refer to the book \cite{ogusloggeom} for background on log geometry, \cite{mythesislogprodfmla} for the basics of log normal cones and the log product formula, and \cite{QK1} for quantum $K$-theory and $K$-theoretic virtual classes without log structure.

Let $V, W$ be log smooth quasiprojective log schemes. 
Write 
$$Q := \GS{V} \times_{\Ms}^\ell \GS{W},$$ 
where $\GS{X}$ is the stack of log stable maps to $X$, and $\times^{\ell}$ the \emph{fs fiber product}, or fiber product in the category of fs log schemes \cite[Corollary III.2.1.6]{ogusloggeom}. We have maps
\[\GS{V \times W} \overset{h}{\to} Q \overset{\tilde{\Delta}}{\to} \GS{V} \times \GS{W}.\]

The stack $Q$ can acquire a $K$-theoretic \emph{log virtual fundamental class} $[Q]^{\ell vir}$ -- otherwise known as a \emph{log virtual structure sheaf} $\OO_Q^{\ell vir}$ -- in two ways: by pulling back that of $\GS{V} \times \GS{W}$ or pushing forward that of $\GS{V \times W}$. The log product formula asserts these are equal:

\begin{theorem} [$=$\ref{thm:logprodfmla}] \label{t:0.1}
The classes
\[h_\ast \lvir{\GS{V \times W}/\Mprel} = \gys{\Delta} \lvir{\GS{V} \times \GS{W}/\Mprel \times \Mprel}\]
are equal in $\HH(Q)$ as well as $\HHl(Q)$.
Here $\HH(Q) := K(\Coh(Q))$ is the Grothendieck group of coherent sheaves on the lis-\'et site; $\HHl(Q)$ is an inverse limit of $\HH$-theories of log blowups of $Q$ defined in Section~\ref{section:loghhltheory}; $\gys{\Delta}$ is the \emph{log Gysin map} defined in Definition~\ref{d:1.2}.
\end{theorem}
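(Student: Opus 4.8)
The plan is to deduce the identity from functoriality of the $K$-theoretic log virtual pullback applied to the Cartesian square cutting out $Q$, following the template of the Chow-theoretic argument of \cite{mythesislogprodfmla} but carried out with virtual structure sheaves as in \cite{QK1}. Recall that each $\GS{X}$ carries a log perfect obstruction theory relative to $\Mprel$ together with a domain-stabilization morphism $\GS{X}\to\Ms$, and that $\Ms$ is log smooth, so its diagonal $\Delta\colon\Ms\to\Ms\times\Ms$ is log-lci. Since $Q=\GS{V}\times^\ell_{\Ms}\GS{W}$, writing $p$ for the map recording the pair of stabilized domains, we get the fs-Cartesian square
\[
\begin{tikzcd}
Q \ar[r, "\tilde{\Delta}"] \ar[d] & \GS{V}\times\GS{W} \ar[d, "p"] \\
\Ms \ar[r, "\Delta"] & \Ms\times\Ms,
\end{tikzcd}
\]
and $\gys{\Delta}$ is the log-lci Gysin pullback for it. The first task is bookkeeping on obstruction theories: the relative log obstruction theory of $\GS{V}\times\GS{W}$ over $\Mprel\times\Mprel$ is the exterior box sum of those of $\GS{V}/\Mprel$ and $\GS{W}/\Mprel$; its $\tilde{\Delta}$-pullback to $Q$ and thence to $\GS{V\times W}$ fits into a distinguished triangle with the relative log obstruction theory of $\GS{V\times W}/\Mprel$ whose third term is the excess of $\Delta$ (the pullback of the log tangent bundle of $\Ms$); and one must factor $\Mprel\times\Mprel\to\Ms\times\Ms$ through the log smooth stabilization maps so that $\gys{\Delta}$ can act on classes defined relative to $\Mprel$.

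Next I would assemble the functorial package for $\lvir{-}$ in $\HH$ and $\HHl$ — the logarithmic $K$-theoretic counterpart of Manolache's virtual pull-back calculus — whose ingredients should be among the functorial properties established above: a \emph{K\"unneth property} $\lvir{\GS{V}\times\GS{W}/\Mprel\times\Mprel}=\lvir{\GS{V}/\Mprel}\boxtimes\lvir{\GS{W}/\Mprel}$; \emph{commutativity} of the log-lci Gysin pullback $\gys{\Delta}$ with log virtual pullback, including compatibility with fs base change; and \emph{functoriality under composition}, i.e. along $\GS{V\times W}\xrightarrow{h}Q\xrightarrow{\tilde{\Delta}}\GS{V}\times\GS{W}$ equipped with the triangle of log obstruction theories from the first paragraph, the induced log virtual pullbacks compose. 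These are cleanest in $\HHl$, where log blowups become invertible; one then transports to $\HH$ along the canonical projection $\HHl(Q)\to\HH(Q)$, under which the two versions of $\lvir{-}$ correspond.

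Granting these, the computation is short: starting from $\gys{\Delta}\lvir{\GS{V}\times\GS{W}/\Mprel\times\Mprel}$, rewrite it by the K\"unneth property, push it through the square using commutativity to identify $\gys{\Delta}$ of the box class with the log virtual pullback to $Q$ of the box obstruction theory, then use the triangle of obstruction theories and composition-functoriality to replace the box obstruction theory on $Q$ by that of $\GS{V\times W}/\Mprel$ transported along $h$, so that the class becomes $h_*\lvir{\GS{V\times W}/\Mprel}$. Carrying this out in $\HHl(Q)$ and applying $\HHl(Q)\to\HH(Q)$ gives both equalities in the theorem; alternatively the same chase with ordinary virtual structure sheaves on one sufficiently fine log blowup of $Q$ settles the $\HH$ statement directly.

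The principal obstacle is composition-functoriality: there is no deformation to the normal cone in $K$-theory, so compatibility of virtual structure sheaves under composition of obstruction theories must be extracted from the Koszul/derived construction of $\OO^{\ell vir}$, and one must verify it is stable along the tower of log blowups defining $\HHl$ — that is, it descends through the log normal cone. Compounding this, $h$ is proper but in general neither representable nor flat, so $h_*$ and its interaction with $\lvir{-}$ have to be controlled directly on the lis-\'et site, consistently with the definition of $\HH$; and the triangle of log obstruction theories, together with the behavior of $\gys{\Delta}$, demands careful handling of the log cotangent complexes of the non-representable stacks in play. Most of the technical weight of the proof will fall on these points.
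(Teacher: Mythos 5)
Your Gysin-pullback half is essentially sound and matches the paper's bookkeeping: one does route $\gys{\Delta}$ through the stabilization maps (the paper does this via the stack $Q'$ and Lemma~\ref{lem:logflatintegralisomnormalcones}, using that $\Mprel \to \Ms$ is log smooth and integral so the obstruction theory of $\Delta$ pulls back, cf.\ Remark~\ref{rmk:rmksonbigfspbdiagram}), and commutativity of Gysin maps with fs base change is available from \cite{fengquktheory}. The genuine gap is your final step, where ``the class becomes $h_*\lvir{\GS{V\times W}/\Mprel}$.'' K\"unneth, commutativity, and composition-functoriality of virtual pullbacks only compare \emph{pullbacks}; none of them produces a pushforward along $h$, which is a proper map contracting unstable components and is not part of any triangle of obstruction theories over $\Mprel\times\Mprel$. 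What is actually needed is a Costello-type pushforward statement: $h$ sits in an fs cartesian square over $\nu:\frak D\to Q'$, where $\frak D$ is the stack of trios $(C'\leftarrow C\to C'')$ of partial stabilizations and $\nu$ is proper and \emph{birational}, and one must prove $h_*\lvir{\GS{V\times W}/\frak D}=\lvir{Q/Q'}$. In Chow this is Costello's degree-one formula; in $K$-theory the na\"ive Costello formula is false already in pure degree $d>1$ (Remark~\ref{rmk:costellopuredegreed}), and even in the birational case it is not formal: it requires $R\nu_*\OO_{\frak D}=\OO_{Q'}$, i.e.\ a Hironaka-type pushforward theorem for proper birational DM-type maps of log smooth stacks (Proposition~\ref{prop:hironakapfwdthm}, Theorem~\ref{thm:costellopfwdkthy}), together with log-blowup invariance of $\lvir{\cdot}$ (Proposition~\ref{prop:logblowuppfwdvfcs}) to make the identity descend to $\HHl$. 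This is the technical heart of the theorem and is absent from your outline; without it the chain from the pulled-back class on $Q$ to $h_*\lvir{\GS{V\times W}/\Mprel}$ is unjustified.

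You also misidentify the principal obstacle. Deformation to the normal cone does exist in $K$-theory --- the paper's Gysin maps are defined exactly via $\DNC{X/U}$ --- and commutativity and functoriality of virtual pullbacks in $\HH$-theory are already established in \cite{fengquktheory}, so ``composition-functoriality'' is not where the difficulty lies. The new $K$-theoretic difficulty is the pushforward operation itself: controlling $h_*$ (equivalently $\nu_*$) on virtual structure sheaves, which is precisely what the Hironaka/Costello results of Section~\ref{s:2} supply, and what any proof of this statement must contain in some form.
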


Writing 
\[
ev : \GS{V} \to V^n, \quad ev : \GS{W} \to W^n
\]
for the evaluation maps, we get an equality between log quantum $K$-invariants:

\begin{corollary} \label{cor:chiintformoflogprodfmla}
If $\alpha \in \HH(V)^{\otimes n}, \beta \in \HH(W)^{\otimes n}$, the following invariants are equal
\[
 \chi \left( \OO^{\ell vir}_{\GS{V \times W}} \otimes ev^\ast(\alpha) \otimes ev^\ast(\beta) \right) = \chi \left( \gys{\Delta} ( \OO^{\ell vir}_{\GS{V} \times \GS{W}} \otimes ev^\ast\alpha \times ev^\ast\beta) \right).
\]
\end{corollary}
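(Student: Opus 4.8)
The plan is to derive Corollary~\ref{cor:chiintformoflogprodfmla} directly from Theorem~\ref{t:0.1} by applying the sheaf Euler characteristic $\chi$, which is just pushforward to a point. First I would start from the equality
\[
h_* \lvir{\GS{V \times W}/\Mprel} = \gys{\Delta} \lvir{\GS{V} \times \GS{W}/\Mprel \times \Mprel}
\]
in $\HH(Q)$ guaranteed by Theorem~\ref{t:0.1}. Rewriting in terms of virtual structure sheaves, this says $h_* \OO^{\ell vir}_{\GS{V\times W}} = \gys{\Delta}\, \OO^{\ell vir}_{\GS{V}\times\GS{W}}$ in $\HH(Q)$. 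The left-hand side of the Corollary is obtained by tensoring both sides with $ev^*(\alpha)\otimes ev^*(\beta)$ pulled back to $Q$ via $\tilde\Delta$, then applying $\chi$; the only subtlety is moving the tensor factors across $h_*$, which is the projection formula in coherent $K$-theory: $h_*\big(\OO^{\ell vir}_{\GS{V\times W}}\otimes h^*\tilde\Delta^*(ev^*\alpha\otimes ev^*\beta)\big) = h_*\OO^{\ell vir}_{\GS{V\times W}}\otimes \tilde\Delta^*(ev^*\alpha\otimes ev^*\beta)$. Since the composite $\tilde\Delta\circ h$ is the natural map $\GS{V\times W}\to \GS{V}\times\GS{W}$ that intertwines the evaluation maps to $V^n$ and $W^n$, the pullback $h^*\tilde\Delta^*(ev^*\alpha\otimes ev^*\beta)$ is exactly $ev^*(\alpha)\otimes ev^*(\beta)$ on $\GS{V\times W}$ (using that evaluation to $(V\times W)^n$ factors the evaluations to $V^n$ and $W^n$), giving the left side of the Corollary.

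For the right-hand side I would similarly tensor $\gys{\Delta}\,\OO^{\ell vir}_{\GS{V}\times\GS{W}}$ with $\tilde\Delta^*(ev^*\alpha\otimes ev^*\beta)$ and apply $\chi$. Here I would invoke the projection formula for the Gysin map $\gys{\Delta}$ — that $\gys{\Delta}(x)\otimes y = \gys{\Delta}(x\otimes \Delta^* y)$ for the refined/log Gysin pullback along the diagonal — together with the compatibility of $ev$ on $\GS{V}\times\GS{W}$ with $ev$ on $Q$ under $\tilde\Delta$, so that $\tilde\Delta^*(ev^*\alpha\otimes ev^*\beta)$ is pulled back from the classes $ev^*\alpha\times ev^*\beta$ on $\GS{V}\times\GS{W}$. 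This turns the tensored right-hand side into $\gys{\Delta}(\OO^{\ell vir}_{\GS{V}\times\GS{W}}\otimes ev^*\alpha\times ev^*\beta)$, which is the class appearing inside $\chi$ on the right of the Corollary. Finally, $\chi$ of the two equal classes in $\HH(Q)$ agree because $\chi$ is a well-defined homomorphism $\HH(Q)\to\ZZ$ (or $\HH(\mathrm{pt})$) factoring through $\HH(Q)$.

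The one genuine point to check carefully — and the main obstacle — is that the log Gysin map $\gys{\Delta}$ and log virtual classes satisfy the expected projection formula and that $\chi$ descends through the log-modified $K$-theory $\HHl$; these are properties of the log virtual fundamental classes established earlier in the paper, so I would cite the functoriality package (projection formula, compatibility with pushforward and with tensoring by pulled-back classes) proved there, and the identification of $\chi$ on $\HH(Q)$ with $\chi$ on $\HHl(Q)$ from Section~\ref{section:loghhltheory}. Everything else — the identifications of evaluation maps under $h$ and $\tilde\Delta$, and the projection formula for the proper map $h$ — is standard coherent-sheaf $K$-theory bookkeeping. So the proof amounts to: apply $\chi$ to the identity in Theorem~\ref{t:0.1} after tensoring with the appropriate pullback of $ev^*\alpha\otimes ev^*\beta$, and push the tensor factors through $h_*$ and $\gys{\Delta}$ using the respective projection formulas.
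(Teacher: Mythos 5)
Your proposal is correct and matches the paper's own (very terse) argument: the paper likewise deduces the corollary from Theorem~\ref{t:0.1} together with the projection formula, which is exactly your mechanism of tensoring the identity in $\HH(Q)$ with the pulled-back evaluation classes and moving the factors through $h_*$ and $\gys{\Delta}$ before applying $\chi$. The extra bookkeeping you supply (compatibility of evaluation maps under $h$ and $\tilde\Delta$, and $\chi$ as pushforward to a point) is the standard content the paper leaves implicit, so no genuinely different route is involved.
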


This follows from Theorem~\ref{t:0.1} and the projection formula. Setting the log structures to be trivial, we obtain the ordinary (non-log) versions of the above results.

\begin{corollary} \label{cor:ordinaryprodfmla}
 The ``ordinary'' (non-log) versions of Theorem~\ref{t:0.1} and Corollary~\ref{cor:chiintformoflogprodfmla} also hold.
\end{corollary}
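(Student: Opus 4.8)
The plan is to deduce both non-log statements from Theorem~\ref{t:0.1} and Corollary~\ref{cor:chiintformoflogprodfmla} by specializing them to the case where $V$ and $W$ carry the \emph{trivial} log structure, and then identifying every object appearing there with its classical analogue. First I would recall that when $V$ has trivial log structure the stack $\GS{V}$ of log stable maps has underlying stack the ordinary Kontsevich space $\Msi{V}$, its log structure being the canonical (minimal) one pulled back strictly from $\Ms$ along the stabilization morphism $\Msi{V} \to \Ms$; the same holds for $W$ and, since a product of trivial log structures is trivial, for $V \times W$. Consequently the structure morphisms $\GS{V} \to \Mprel$ and $\GS{W} \to \Mprel$ are the usual ones composed with the strict map $\Ms \to \Mprel$.

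Next I would identify the remaining ingredients. Because $\GS{V} \to \Ms$ and $\GS{W} \to \Ms$ are strict and $\Ms$ is fs, the fs fibre product $Q = \GS{V} \times^\ell_{\Ms} \GS{W}$ has underlying stack the ordinary fibre product $\Msi{V} \times_{\Ms} \Msi{W}$ with log structure pulled back strictly from $\Ms$; under this identification $h$ becomes the classical comparison morphism and $\tilde{\Delta}$ the classical map to the product $\Msi{V} \times \Msi{W}$. Similarly $\Ms \times^\ell \Ms$ has underlying stack $\Ms \times \Ms$ with the ordinary diagonal, which is a regular closed immersion since $\Ms$ is smooth; so the log Gysin map $\gys{\Delta}$ specializes to the ordinary refined Gysin pullback $\Delta^!$. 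The step requiring genuine care --- and the main obstacle --- is to verify that the $K$-theoretic log virtual fundamental classes reduce to the Behrend--Fantechi virtual structure sheaves of \cite{QK1}, i.e.\ that $\lvir{\GS{V}/\Mprel} = \vir{\Msi{V}}$ in $\HH$ (and likewise for $W$, for $V\times W$, and for the product obstruction theory on $\GS{V}\times\GS{W}$). For this one has to trace through the construction of $\lvir{-}$ in the earlier sections and check that, with trivial target log structure, the logarithmic modifications and log cones entering the definition contribute nothing beyond the curve's own log structure --- which is log smooth over $\Mprel$ and hence carries no relative obstructions. This is conceptually transparent but the bookkeeping of minimal log structures must be done carefully.

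Granting this, the non-log form of Theorem~\ref{t:0.1} is precisely the identity $h_*\vir{\Msi{V\times W}/\Mprel} = \Delta^!\vir{\Msi{V}\times\Msi{W}/\Mprel\times\Mprel}$ in $\HH(Q) = K(\Coh(Q))$, which is exactly what the ``$\HH(Q)$ as well'' clause of Theorem~\ref{t:0.1} supplies after the above identifications; note that the $\HHl$-theory is not needed for the non-log statement. Finally, the non-log form of Corollary~\ref{cor:chiintformoflogprodfmla} follows from this identity by the projection formula, applied to the evaluation maps $ev : \Msi{V} \to V^n$ and $ev : \Msi{W} \to W^n$, in the same way the logarithmic corollary is derived from Theorem~\ref{t:0.1}.
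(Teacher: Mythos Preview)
Your approach is exactly the paper's: specialize Theorem~\ref{t:0.1} and Corollary~\ref{cor:chiintformoflogprodfmla} to trivial log structures on $V$ and $W$ and then identify every log object with its classical counterpart. The paper is terser, asserting only that ``without log structure, the log virtual class and Gysin map become the ordinary virtual class and Gysin map $\Delta^!$ in \cite{QK1}.''

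One point in your writeup needs correction. When $V$ has trivial log structure, the forgetful map $\GS{V} \to \Mprel$ remembering the prestable domain is strict, and this is what makes $\Cl{\GS{V}/\Mprel}$ coincide with the ordinary normal cone and hence $\lvir{\GS{V}/\Mprel}$ with the ordinary virtual structure sheaf. But this map does \emph{not} factor through the open immersion $\Ms \subseteq \Mprel$ as you claim: the domain of a stable map need not itself be a stable curve. Correspondingly, the minimal log structure on $\GS{V}$ is pulled back from $\Mprel$ along the forgetful map, not from $\Ms$ along stabilization. Once you make this adjustment, the identifications you describe (of $Q$, $h$, $\tilde\Delta$, $\gys{\Delta}$, and the virtual classes) go through and your deduction is complete.
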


Without log structure, the log virtual class and Gysin map becomes the ordinary virtual class and Gysin map $\Delta^!$ in \cite{QK1}.

\medskip

M.~Kontsevich and Yu.~Manin formulated the product formula in Gromov--Witten theory in terms of cohomological field theories and proved it in genus zero \cite{KMproduct}. K.~Behrend \cite{prodfmla} generalized it to all genera. Extensions to log Gromov--Witten theory in Chow Groups were obtained by F.~Qu and Y.-P.~Lee \cite{leequlogprodfmla}, D.~Ranganathan \cite{dhruvlogprodfmla}, and L.~Herr \cite{mythesislogprodfmla}. Each builds on the proof of Behrend; the present article further adapts Herr's proof to $\HH$-theory.

Log $\HHl$-theory $\HHl(Q)$ introduced in Section~\ref{section:loghhltheory} can be traced back to \cite{logmotives} and \cite{logchowrecentpaper}. Operations in $\HHl$-theory require comparing the inverse systems of log blowups of the source and target. Section~\ref{section:loghhltheory} undertakes this comparison and will be of use in the forthcoming \cite{rendimentodeicontiwisepandharipandeherrmymolcho} and an ongoing project of the current authors. 

One key ingredient in the formulation of the \emph{log product formula} is the \emph{log Gysin map} $\gys{\Delta}$ in $\HH$-theory in Section~\ref{s:1}. This may be refined to $\HHl$-theory in certain situations. The log normal cone and log virtual fundamental class are defined in $K$-theory, parallel to the constructions denoted simply $\Delta^!$ etc. in \cite{mythesislogprodfmla} for Chow groups. We stress that the product formula would \emph{not} be true with the ordinary Gysin map $\Delta^!$.

The main new technical complication absent for Chow groups comes from the pushforward operation in $\HH$-theory (Section~\ref{s:2}). For example, the na\"ive translation of \emph{Costello's formula} in Chow \cite[Theorem 5.0.1]{costello} to $K$-theory is false for pure degrees $d > 1$. Nevertheless, we prove a log version of Costello's formula in $\HH$-theory for pure degree one (cf.\ \cite[Theorem 4.1]{mythesislogprodfmla}, \cite{mycostellogeneralization}). 

\begin{theorem} [$=$ \ref{thm:costellopfwdkthy}]
Consider an fs pullback square
\[\begin{tikzcd}
X' \ar[d, "f'"] \ar[r, "p"] \lpb      &X \ar[d, "f"]      \\
Y' \ar[r, "q"]      &Y,
\end{tikzcd}\]
where $Y, Y'$ are log smooth, $q$ proper birational and $f$ defines a perfect obstruction theory. Endow $f'$ with the pullback perfect obstruction theory. Under additional technical assumptions specified in Theorem~\ref{thm:costellopfwdkthy}, we have
\[p_\ast \lvir{X'/Y'} = \lvir{X/Y} \quad \in \HH(X).\]
\end{theorem}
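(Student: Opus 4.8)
The plan is to mimic Costello's original argument \cite{costello} and Herr's log adaptation \cite[Theorem 4.1]{mythesislogprodfmla}, but tracking virtual structure sheaves in $\HH$-theory rather than cycles in Chow. The first reduction is to replace the perfect obstruction theory by its associated cone stack: since $q$ is proper birational and $Y,Y'$ are log smooth, the log normal cone $\mathfrak{C}_{X/Y}$ (built in $\HH$-theory earlier in the paper, parallel to $\Delta^!$ in \cite{mythesislogprodfmla}) pulls back under $p$ to $\mathfrak{C}_{X'/Y'}$ with the pullback obstruction theory. So it suffices to prove the statement at the level of the cones: that $p$ restricted to these cones is proper and satisfies $p_*[\mathfrak{C}_{X'/Y'}] = [\mathfrak{C}_{X/Y}]$ after the virtual pullback of the obstruction bundle. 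This decouples the $K$-theoretic issue (pushforward along $p$) from the obstruction-theory bookkeeping, which is formal once the cone statement is known.

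Next I would analyze $p$ on the cones. Because $q$ is birational, $p$ is an isomorphism over a dense open, and the content is that the extra components of $\mathfrak{C}_{X'/Y'}$ lying over the exceptional locus of $q$ push forward to zero. In Chow this is a dimension count; in $\HH$-theory one instead needs that $Rp_* \OO$ of the relevant components is trivial, i.e.\ $p$ has \emph{connected fibers with rational singularities} on the relevant loci, or more precisely that the higher direct images vanish and $p_*\OO = \OO$. This is exactly where the ``pure degree one'' hypothesis enters and where the $d>1$ naïve translation fails: for a degree-$d$ cover the structure sheaf pushes forward to something of rank $d$, not $1$, so the formula breaks; only in the birational (degree one) case do we get $Rp_*\OO_{X'} = \OO_X$. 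I would isolate the exact ``additional technical assumptions'' of Theorem~\ref{thm:costellopfwdkthy} as precisely what is needed to guarantee this vanishing — presumably a statement that $q$ (hence $p$) is a log blowup or at least has rationally connected / rational-singularity fibers compatible with the log structure.

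With that in hand, the argument assembles as follows. Stratify $X$ (equivalently, the cone) by the loci where $q$ is an isomorphism versus where it contracts; over the isomorphism locus $p_*$ is the identity on sheaves and there is nothing to prove. Over the exceptional locus, use the projection formula together with $Rp_*\OO_{X'} = \OO_X$ to conclude that $p_*$ of the virtual structure sheaf of the preimage cone equals the virtual structure sheaf downstairs. Finally, combine with the compatibility of virtual pullback with $p_*$ (the projection-type formula for perfect obstruction theories in $\HH$-theory, which I would cite from the earlier sections or from \cite{QK1}) to upgrade the cone-level identity to the claimed $p_* \lvir{X'/Y'} = \lvir{X/Y}$.

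The main obstacle will be the structure-sheaf pushforward step: unlike in Chow, where a cycle supported in too small a dimension simply vanishes, in $\HH$-theory a contracted subvariety contributes $Rp_*\OO$, which need not vanish. Getting $Rp_*\OO_{X'} = \OO_X$ — including the vanishing of \emph{all} higher direct images, not just $R^0 p_* \OO = \OO$ — on the precise loci that appear in the log normal cone of $X'/Y'$ is the crux, and it is what forces both the degree-one hypothesis and the unspecified technical assumptions. Everything else (base change for log normal cones, the projection formula, compatibility of virtual pullback with proper pushforward) is standard once that vanishing is established.
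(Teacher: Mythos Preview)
Your proposal identifies the right enemy---the identity $Rp_*\OO_{X'}=\OO_X$ is indeed the $K$-theoretic replacement for the dimension count---but your plan to attack it head-on will not work. Hironaka's pushforward theorem, even in the log form proved earlier in the paper, requires source and target to be log smooth. Here $X$ and $X'$ are the total spaces of the obstruction-theoretic maps and carry no smoothness hypothesis whatsoever, so there is no reason for $Rp_*\OO_{X'}=\OO_X$ to hold, and your stratify-and-vanish argument over the exceptional locus has no input to feed on. You flag this as ``the main obstacle'' but offer no mechanism to overcome it; this is a genuine gap, not just missing detail.

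The paper sidesteps the problem entirely by a factorization trick. Using the relative Artin fan (Construction~\ref{const:artinfanbiratl}), one writes $X\to W\to Y$ with $X\to W$ strict and $W\to Y$ log \'etale; since $Y$ is log smooth, so is $W$. Pulling back along $q$ gives $X'\to W'\to Y'$, and the substantive claim is that $s:W'\to W$ is proper birational between \emph{log smooth} stacks---this is where the work goes, via a further log blowup to force integrality and hence flatness. Now Hironaka applies to $s$, yielding $s_*[\OO_{W'}]=[\OO_W]$. Because the top square $X'\to X$ over $W'\to W$ is strict, the log obstruction theory for $f$ becomes an ordinary obstruction theory for $X\to W$, and Qu's standard compatibility of Gysin maps with proper pushforward \cite[Proposition 2.4]{fengquktheory} finishes the job. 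The point is that one never needs $Rp_*\OO_{X'}=\OO_X$; one only needs the analogous statement one level down, where log smoothness is available.
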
 

The same is later shown in $\HHl$-theory. Along the way, Hironaka's pushforward theorem \cite[Corollary 2 pg 153]{hironakathesis}, as conjectured by A.~Grothendieck, is strengthened to log smooth stacks. We are not aware of a previous proof even for ordinary stacks. Section \ref{section:logprodfmla} applies these tools to obtain the log product formula. 

To our knowledge, the ordinary product formula in  Corollary~\ref{cor:chiintformoflogprodfmla} is also new. The log version is more potent, as one can see with toric varieties. Any pair of toric varieties of the same dimension are related by a roof of log blowups. Log virtual fundamental classes are invariant under log blowups as in \cite{birationalinvarianceabramovichwise}, \cite[Theorem 3.10]{mythesislogprodfmla} (for Chow), and Proposition \ref{prop:logblowuppfwdvfcs} below (for $K$-theory). Hence \emph{all log Gromov-Witten invariants of a fixed dimension with suitable insertions are the same}.
To compute them, one can take $(\PP^1)^n$. The \emph{log product formula} reduces this to the case of $\PP^1$, whose \emph{log Gromov-Witten invariants} are related to \emph{double ramification cycles}. This plan of attack was shown to us independently by Jonathan Wise and Dhruv Ranganathan, and the main obstacle is the complexity of the operations $\gys{\Delta}$.

The product formula is one of a handful of general tools in the Gromov-Witten repertoire. Versions of the degeneration and localization formulas have been obtained in log Gromov-Witten theory in Chow groups and appear within reach for $K$-theory. The present article is a proof of concept, introducing the log Gysin maps and pushforward identities for this ongoing program.

Throughout, we work with fs (fine and saturated) log structures. The primary reason one needs log virtual fundamental classes instead of ordinary ones is that Diagram~\eqref{eqn:cartesiandiagramlogprodfmla} in Section~\ref{section:logprodfmla} is not cartesian in the category of ordinary schemes. The fs pullback often differs from the scheme-theoretic pullback, even on underlying schemes.

\subsection{Conventions and Definitions}

We only consider fs (fine and saturated) log structures. We use the notation $\Log, \Log Y$ to refer to Olsson's stacks $\mathcal{T\!\!}or, \mathcal{T}\!\!or \,Y$ in \cite{logstacks} with $T$-points:
\[\Log(T) := \{\text{a log structure }M_T\text{ on }T.\}\]
\[\Log X(T) := \left\{
\begin{tikzcd}[row sep=small]
\text{a log structure }M_T\text{ on }T  \text{ and}     \\
\text{a map of log algebraic stacks }T \to X
\end{tikzcd}
\right\}.\]

All stacks are assumed Artin algebraic, locally of finite type over $\CC$, quasiseparated in the sense of \cite[04YW]{sta}, and locally noetherian. Those stacks that aren't quasicompact, e.g., $\Log Y, \Mprel$, are quickly replaced by suitable quasicompact open substacks to avoid imposing ``quasicompact support'' conditions on our $K$-theories. These are not algebraic stacks in the sense of \cite{laumonmoretbaillystacks} because they may not adhere to their stricter notion of ``quasiseparatedness.'' By ``log algebraic stack,'' we mean a stack $X$ as above with a map $X \to \Log$. 

The notation $\ulcorner \ell$, $\times^\ell$, $\Cl{}$ denotes the fs pullbacks and log normal cone, instead of the ordinary scheme-theoretic pullbacks or normal cone $\ulcorner, \times, \C{}$. We use $\ulcorner \msout{\ell}, \times^{\msout{\ell}}$, $\Clstrict{}$ if it so happens that the two coincide. These distinctions are subtle but important, as the difference between pullbacks of schemes and log schemes is the reason for the log normal cone. 

Let $\sigma$ be a sharp fs monoid. Write
\[\Aff_\sigma := \Spec \CC[\sigma],\]
\[\AF{\sigma} := [\Aff_\sigma/\Aff_{\gp{\sigma}}],\]
so that
\[\Hom(X, \AF{\sigma}) = \Hom(\sigma, \Gamma(\overline{M}_X)).\]
An Artin cone is an algebraic stack of the form $\Aff_\sigma$ with its natural log structure. An Artin fan is a log algebraic stack with a representable strict \'etale cover by Artin cones. Note the association $\sigma \mapsto \AF{\sigma}$ is contravariant here, as opposed to the covariant conventions in \cite{wisebounded}, \cite{birationalinvarianceabramovichwise}.

\subsection{Acknowledgments}

We would like to thank Lawrence Barrott, Sebastian Bozlee, Christopher Hacon, Adeel Khan, Gebhard Martin, Pat McFaddin, Sam Molcho, Feng Qu, Dhruv Ranganathan and Jonathan Wise for helpful correspondence. Section~\ref{section:loghhltheory} arose from conversations with Molcho, who pointed out Remark~\ref{rmk:logsmoothimpliesirred} and helped remove unnecessary hypotheses on Proposition~\ref{prop:logblowuppfwdvfcs}. Martin contributed Remark~\ref{rmk:costellopuredegreed}. We also thank the mathoverflow community for \cite{371296}, \cite{375138}, \cite{376138}, \cite{377014}. Navid Nabijou's informal \textit{Guided Meditations} seminar helped catch errors in a draft of Section \ref{s:ddrcounterex} and notes by Dhruv Ranganathan and David Holmes helped set it right. 

The first and the third author wish to thank Academia Sinica, MoST and Simons Foundation for their supports.
The second author is grateful to the NSF for partial financial support through RTG Grant \#1840190.
Our remote communication has been facilitated by the free Google Meet on expensive Apple and Microsoft Computers. We would like to thank Google for making its program freely available. 

\section{Log $\HH$-theory} \label{s:1}

We amalgamate \cite{mythesislogprodfmla}, \cite{fengquktheory}, \cite{virtpb} to develop the basic properties of log virtual fundamental classes in $\HH$-theory. 

A locally noetherian algebraic stack $X$ has a lis-\'et site $X_{lis-\acute{e}t}$ consisting of smooth $X$-schemes $T \to X$ with $T$ affine and \'etale $X$-maps $T' \to T$ between them. We define the categories $\QCoh(X), \Coh(X)$ of quasi-coherent and coherent sheaves to be cartesian sections of the stack of sheaves of modules, with or without finite generation hypotheses \cite{sheavesonartinstacks}. Define $\HH(X) := K(\Coh(X))$ to be the group generated by the coherent sheaves on $X$ modulo relations for exact sequences as in \cite[1.3.2]{fengquktheory}, \cite[3A]{vanishingthmsnegkthy}. This group was denoted $G^{naive}_0(X)$ and shown to coincide with the Thomason-Trobaugh definition of $G$-theory under the assumption $D_{qc}(X)$ is compactly generated in \cite[Lemma 5.5]{vanishingthmsnegkthy}.

To use excision sequences, we need our stacks to be quasicompact and quasiseparated \cite{sta}. 

\begin{remark}\label{rmk:qsepdefns}
There are two notions of \emph{quasiseparated} for a stack $X$ in the literature: the Stacks Project requires the diagonal $\Delta_X$ to be quasicompact and quasiseparated, while \cite{logstacks} and \cite{laumonmoretbaillystacks} require that the diagonal $\Delta_X$ be quasicompact and \emph{separated}. We adhere to the weaker notion \cite[04YW]{sta}, and \cite[Theorem 3.2]{logstacks} shows that $\Log S \to S$ quasiseparated in our sense despite \cite[Remark 3.17]{logstacks} pointing out that $\Log S \to S$ is not quasiseparated in the stronger sense.

Given a morphism $X \to \Log Y$, we can find a quasicompact open subset $U \subseteq \Log Y$ through which the morphism factors: $X \to U \subseteq \Log Y$. Put it another way, any morphism $X \to Y$ with $X$ quasicompact factors as $X \to U \to Y$ with $U \to Y$ quasiseparated and log \'etale, $U$ quasicompact, and $X \to U$ strict. 
\end{remark}

\begin{definition} \label{d:1.2}
Assume $X$, $Y$ are quasiseparated and $X$ is quasicompact. The \textit{log (intrinsic) normal cone} of a DM type map $f : X \to Y$ is defined by
\[\Cl{X/Y} := \C{X/\Log Y}. \]
A \textit{log perfect obstruction theory} for $f$ is an embedding $\Cl{X/Y} \subseteq E$ into a vector bundle over $X$. One similarly has a log deformation to the normal cone $\DNC{X/Y}^\ell = \DNC{X/\Log Y}$ \cite{fengquktheory}. We think of each as $X$-stacks, writing $\Cl{f}|_T$ for the pullback $\Cl{f} \times_X T$ along a map $T \to X$. 

Pick a factorization $X \to U \to Y$ with $X \to U$ strict, $U \to Y$ quasiseparated and log \'etale as in Remark \ref{rmk:qsepdefns}. Given a log perfect obstruction theory, define the \textit{log Gysin map} as
\[\gys{f} : \HH(\Log Y) \to \HH(U) \to \HH(X),\]
the composite of restriction to $U$ and the ordinary Gysin map for $X \to U$ using the obstruction theory $\C{X/U} \simeq \Cl{X/Y} \subseteq E$. The \textit{log virtual fundamental class} $\lvir{X/Y}$ is $\gys{f}[\OO_{\Log Y}]$, if $[\OO_{\Log Y}]$ is defined. This operation and notation may similarly be extended to the $\HH$-theory of log stacks over $Y$.

\end{definition}

\begin{remark}\label{rmk:gysinmapdefnbasicprops}

The map $\gys{f} : \HH(\Log Y) \to \HH(X)$ does not depend on the choice of $U \subseteq \Log Y$. The map $\Log Y \to Y$ is locally of finite presentation \cite[Theorem 1.1]{logstacks}, so $\Log Y$ is locally noetherian \cite[01T6]{sta} and the \textit{map} $i : U \subseteq \Log Y$ is quasicompact \cite[01OX]{sta}. The restriction $\HH(\Log Y) \to \HH(U)$ is consequently well-defined because pullback preserves coherence \cite[Lemma 6.5]{sheavesonartinstacks}. 

The ordinary Gysin map $g^!$ for $g : X \to U$ is defined by the usual commutative diagram \cite[(0.2)]{fengquktheory}
\[\begin{tikzcd}
\HH(\C{X/U}) \ar[r, "i_\ast"]        &\HH(\DNC{X/U}) \ar[r, "j^\ast"] \ar[d, "i^!"]         &\HH(U \times \Aff^1) \ar[r]         &0      \\
        &\HH(\C{X/U})       &\HH(U). \ar[u, "\sim"] \ar[u, "\pi^\ast", swap] \ar[l, dashed, "g^!"]
\end{tikzcd}\]
Then $g^! = i^! \circ ``{(j^\ast)^{-1}}" \circ \pi^\ast$. One composes with the inclusion and intersection with the zero section $\HH(\C{X/U}) \to \HH(E) \to \HH(X)$ to get a class in the $\HH$-theory of $X$. 

The above serves as a proxy for the analogous diagram with $\Log Y$ in place of $U$ because we aren't aware of a proof of excision for non-quasicompact stacks. Remark that $\C{X/U} \simeq \Cl{X/Y} \simeq \Cl{X/Y} \times_Y U$, $\DNC{X/U} \simeq \DNC{X/Y}^\ell|_U$ because $U \subseteq \Log Y$ is open. 

Examine the case of $\gys{f}[\OO_V]$, for $V$ a log smooth $U$-stack. 
Take the fs pullback
\[\begin{tikzcd}
W \ar[r] \ar[d] \lpb       &V \ar[d]      \\
X \ar[r]       &U.
\end{tikzcd}\]
An inclusion $\DNC{W/V} \subseteq \DNC{X/Y}|_W$ results, and witness $j^\ast(\OO_{\DNC{W/V}}) = \OO_{V \times \Aff^1}$. Thus $g^![\OO_V] = i^![\OO_{\DNC{W/V}}] = [\OO_{\Cl{W/V}}]$, and virtual fundamental classes are again fundamental classes of log normal cones.  

If $Y$ is log smooth, $\lvir{X/Y}$ is independent of $Y$ but not of the obstruction theory $E$. By this, we mean that a composite $X \to Y' \to Y$ with obstruction theories $E$ for $X \to Y$ and $E'$ for $X \to Y'$ coming from two-term perfect complexes $\cal E^\bullet, \cal E'^\bullet$ with a compatibility datum
\[\lccx{Y'/Y}|_X \to \cal E^\bullet \to \cal E'^\bullet \to \]
will produce the same virtual fundamental class $\lvir{X/Y} = \lvir{X/Y'}$. The proof is as in \cite[Theorem 3.12]{mythesislogprodfmla}. We omit the proof but use the notation $\lvir{X}$ for $\lvir{X/Y}$ if $Y$ is log smooth. 

\end{remark}

\begin{remark}[Artin fans]

We review a construction generalizing the ``skeleton'' or cone over the dualizing complex of an s.n.c. divisor $D \subseteq X$.

The fs log stack $\AF{P}$ is like $``\Spec P"$ for monoids:
\[\Hom_{fs}(X, \AF{P}) = \Hom_{mon}(P, \Gamma(\overline{M}_X)).\]
Surprisingly, the \emph{stack} $\AF{P}$ represents a strict-\'etale \emph{sheaf} among fs log schemes. Put another way, $\AF{P} \to \Log$ represents an \'etale sheaf on $\Log$-schemes. 

Any fine, finite type log scheme $X$ has a locally closed stratification $X = \bigsqcup X_i$ such that $\overline{M}_X|_{X_i}$ is locally constant. Suppose $\overline{M}_X|_{X_i}$ were actually constant, and write $\overline{M}_{X_i} = \Gamma(X_i, \overline{M}_X|_{X_i})$. There are generization maps if $X_i \subseteq \overline{X}_j$: 
\[\overline{M}_{X_i} \to \overline{M}_{X_j}.\]

If $I$ is the category of strata with generizations $X_i \subseteq \overline{X_j}$ as morphisms, this defines a functor
\[I \to (Mon)^{fine} \to \Log_{\text{\'et}}; \quad \quad \quad i \mapsto \overline{M}_{X_i} \mapsto \AF{\overline{M}_{X_i}}.\]
Taking the colimit as \'etale sheaves on $\Log$ gives the \textit{Artin fan} of $X$ \cite{wisebounded}:
\[\colim_I \AF{\overline{M}_{X_i}} \to \Log.\]

If $\overline{M}_{X_i}$ are locally constant but not constant, they entail monodromy representations of the strata $\pi_1(X_i)$. Cover $X$ instead by strict \'etale maps from schemes with no monodromy. Define $\AF{X}$ to be the colimit of the Artin fans of this cover, again as sheaves over $\Log$.  

The same procedure was done with $\Hom(P, \RR_{\geq 0})$ or $\Hom(P, \NN)$ in place of $\AF{P}$ in \cite{kempfknudsenmumfordsaintdonatkkmstoroidalembeddings} and \cite{loggw}. These constructions and their variants have gone by ``generalized cone complexes,'' ``Kato fans,'' ``monoschemes,'' ``tropicalization,'' etc. in the literature. 

The Artin fan need not be functorial in $X$ \cite[5.4.1]{skeletonsfansabramchenmarcusulrischwise}. For a morphism $X \to Y$, define $\AF{X/Y}$ similarly, using $\Log Y$ in place of $\Log$ above. The map $X \to \AF{X/Y}$ is strict and $\AF{X/Y} \to \AF{Y}$ is log \'etale. See \cite{rendimentodeicontiwisepandharipandeherrmymolcho} for more on functoriality of the Artin fan.

\end{remark}

The hypotheses ``log smooth and equidimensional'' for log virtual classes in \cite{mythesislogprodfmla} were redundant:

\begin{remark}\label{rmk:logsmoothimpliesirred}

We claim connected, quasicompact log smooth stacks $X$ are irreducible. In particular, general quasicompact log smooth stacks are equidimensional and have a fundamental class. This makes the extra hypotheses of ``equidimensional'' or ``irreducible'' on connected log smooth stacks in \cite{mythesislogprodfmla} redundant.

The Artin fan $\AF{X}$ of $X$ is quasicompact because $X$ is, so \cite[Theorem 4.6.2]{wisebounded} supplies a subdivision $\tilde{F} \to \AF{X}$ with a strict map $\tilde{F} \to \AF{\NN}^k$ that is necessarily \'etale. Denote by $\tilde{X} := \tilde{F} \times^{\msout{\ell}}_{\AF{X}} X$ the induced log blowup of $X$, which has a smooth map $\tilde{X} \to \AF{\NN}^k$ to a smooth stack $\AF{\NN}^k$ and hence is smooth. This argument extends \cite[Theorem 5.10]{niziol} to quasicompact log algebraic stacks. 

Log blowups are surjective with geometrically connected fibers \cite[Proposition 2.6]{letcohom2}. A closed (or open), surjective map to a connected topological space with connected fibers has connected source. Hence $\widetilde{X}$ is connected and smooth, which implies irreducible. The same holds for its image $X$. Furthermore, $\Log X$ is irreducible because $X \subseteq \Log X$ is a dense open. It results that $[\OO_{\Log X}]$ is well defined. 

\end{remark}

\begin{remark}\label{rmk:vbtorsorsisomkthy}
Consider a short exact sequence
\[E \to C \to D\]
of cone stacks over noetherian $X$. The pullback is an isomorphism:
\[f^\ast : \HH(D) \simeq \HH(C).\]
The proof of \cite[Theorem 5.7]{vanishingthmsnegkthy} shows $G^{naive}(D) \simeq G^{naive}(C)$ for any vector bundle-torsor, and $\HH(\cdot)=G^{naive}_0(\cdot)$. For example, the intersection with the zero section used to define $\gys{f}$ above is an isomorphism. 

In view of \cite[Proposition 3.3, Corollary 3.4]{khankgthyderalgstacks}, this is an application of \cite[Theorem 3.16]{khankgthyderalgstacks} for perfect stacks $X$. 

\end{remark}

\begin{remark}
For any irreducible, finite type scheme $X$, $\HH(X)$ is generated by $[\OO_V]$ for $V \subset X$ subschemes as a module over $\HH(\Spec \CC)$. The proof uses noetherian induction to reduce to the affine case and applies the Jordan-H\"older filtration. 

This observation asserts that the case of $\gys{f}[\OO_V]$ discussed in Remark \ref{rmk:gysinmapdefnbasicprops} determines $\gys{f}$. We aren't aware of a proof for algebraic stacks. 

\end{remark}

\begin{remark}\label{rmk:gysinmapssquarecommute}
Consider an fs pullback square of qcqs stacks
\[\begin{tikzcd}
X' \ar[r, "p"] \ar[d, "f'"] \lpb      &X \ar[d, "f"]      \\
Y' \ar[r, "q"]      &Y,
\end{tikzcd}\]
where $f$ and $q$ are endowed with log perfect obstruction theories $\Cl{f} \subseteq E$, $\Cl{q} \subseteq F$. The two composite Gysin maps are equal
\[\gys{f'} \gys{q} = \gys{p} \gys{f} : \HH(\Log Y) \to \HH(X').\]

This is seen by replacing $\Log Y, \Log Y', \Log X$ with quasicompact open substacks $U_Y, U_{Y'}, U_X$ containing the images of $X'$ and fitting into a commutative diagram
\[\begin{tikzcd}
X' \ar[dr] \ar[rr, bend left=20] \ar[r, dashed]       &U_X \times_{U_{Y}} U_{Y'} \ar[r] \ar[d] \pb      &U_X \ar[d]        \\
        &U_{Y'} \ar[r]         &U_Y,
\end{tikzcd}\]
applying \cite[Proposition 2.5]{fengquktheory} to the pullback square, and \cite[Proposition 2.11]{fengquktheory} to $X' \to U_X \times_{U_{Y}} U_{Y'} \to U_X$, $X' \to U_X \times_{U_{Y}} U_{Y'} \to U_{Y'}$.

\end{remark}

\begin{remark}

If $Y$ isn't log smooth but $V \to Y$ is a map from such a log stack, one can pull back the square and the log perfect obstruction theories to $V$. This explains the utility of log perfect obstruction theories for non-log smooth $Y$. 

\end{remark}

\section{Pushforward Theorems of Hironaka and Costello} \label{s:2}

Unlike the previous section, these pushforward theorems are not simple applications of \cite{fengquktheory} to maps $X \to \Log Y$. Even for a proper birational morphism $q : Y' \to Y$ between log smooth stacks, one might not have $Rq_\ast \OO_{\Log Y'} = \OO_{\Log Y}$. Indeed, a log blowup $Y' \to Y$ results in an open embedding $\Log Y' \subseteq \Log Y$. Nevertheless, analogues of the pushforward theorems of Hironaka and Costello remain true. We also verify the ``birational invariance'' property of \cite{birationalinvarianceabramovichwise} generalized in \cite[Theorem 3.10]{mythesislogprodfmla}.

A DM type map $f : X \to Y$ of algebraic stacks is \textit{birational} \cite[Definition A.1]{hassetthyeonbiratlmorofstacks} if there is a dense open substack $V \subseteq Y$ whose preimage $f^{-1}V \subseteq X$ is dense and on which $f$ restricts to an isomorphism. This definition coincides with the stacks project if $f$ is locally of finite presentation \cite[0BAC]{sta}. Birationality is smooth-local on the target and satisfies the ``3 for 2'' property:

\begin{center}
If $X \overset{f}{\to} Y \overset{g}{\to} Z$ have composite $h = g \circ f$ and two of $f, g, h$ are birational, so is the third. 
\end{center}

Recall Hironaka's pushforward theorem: a proper birational map $p : X \to Y$ of locally finite presentation with $X, Y$ smooth or rational singularities satisfies $Rp_\ast \OO_X = \OO_Y$ \cite[Theorem 1.1]{hironakapfwdthmposchar}, \cite[Theorem 5.10]{kollarmoribook}, \cite[Corollary 2 pg 153]{hironakathesis}. The reduction to rational singularities merely resolves the singularities of each and then resolves the map between them in such a way that $Rp_\ast \OO_X = \OO_Y$.

\begin{lemma}\label{lem:hironakaforlblowups}
Let $p : \widetilde{X} \to X$ be a log blowup of an fs log smooth algebraic stack $X$ over $\CC$. Then 
\[Rp_\ast \OO_{\widetilde{X}} = \OO_X.\]
\end{lemma}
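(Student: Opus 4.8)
The statement is local on $X$ in the smooth topology, so I would first reduce to the case where $X$ admits a smooth chart. Concretely, using the Artin fan machinery invoked in Remark~\ref{rmk:logsmoothimpliesirred}: the log blowup $p$ is pulled back from a subdivision $\tilde A \to \AF{X}$ of the Artin fan, and after a further subdivision (via \cite[Theorem 4.6.2]{wisebounded}) we may assume $\AF{X}$ itself receives a strict étale map to some $\AF{\NN}^k$. Étale-locally on $X$ this means $X$ carries a smooth, strict map $X \to \AF{\NN}^k = [\Aff^k/\GG_m^k]$, i.e.\ $X$ is étale-locally (the quotient of) a smooth scheme over $\Aff^k$ with its toric boundary, and $\tilde X \to X$ is the pullback of a toric blowup of $\Aff^k$. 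Since $Rp_*$ commutes with the smooth (flat) base change to such a chart, and since formation of $Rp_*\OO$ commutes with the flat quotient maps, it suffices to treat this local model.

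**Key steps.**
\begin{enumerate}
\item \emph{Reduce to a local toric model.} Apply the subdivision argument of Remark~\ref{rmk:logsmoothimpliesirred} to arrange a strict smooth map $X \to \AF{\NN}^k$ with $\tilde X = X \times_{\AF{\NN}^k}^{\msout{\ell}} \tilde F$ for a toric subdivision $\tilde F \to \AF{\NN}^k$; then $\tilde X \to X$ is the base change of $\tilde F \to \AF{\NN}^k$ along a flat (smooth) map, so $Rp_*\OO_{\tilde X} = \OO_X$ follows from $R(\tilde F \to \AF{\NN}^k)_* \OO_{\tilde F} = \OO_{\AF{\NN}^k}$ by flat base change.
\item \emph{Descend along the $\GG_m^k$-quotient.} Writing $\AF{\NN}^k = [\Aff^k/\GG_m^k]$ and $\tilde F = [\tilde Y/\GG_m^k]$ for the corresponding toric variety $\tilde Y$, the quotient presentations are smooth surjective, so it suffices to show $R(\tilde Y \to \Aff^k)_*\OO_{\tilde Y} = \OO_{\Aff^k}$.
\item \emph{Invoke Hironaka on the toric variety.} $\Aff^k$ is smooth hence has rational singularities, $\tilde Y$ is a toric variety which is smooth over $\CC$ (being a subdivision of a smooth fan direction, one can further subdivide to make $\tilde Y$ smooth, which only changes the statement by another application of the same lemma, or simply note toric varieties have rational singularities), and $\tilde Y \to \Aff^k$ is proper birational of finite type. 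Hironaka's pushforward theorem \cite[Theorem 5.10]{kollarmoribook}, \cite[Corollary 2 pg 153]{hironakathesis} then gives $R(\tilde Y \to \Aff^k)_*\OO_{\tilde Y} = \OO_{\Aff^k}$.
\end{enumerate}

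**Main obstacle.** The delicate point is step~(1)–(2): one must be careful that "$\tilde X \to X$ is a flat base change of a map of Artin fans (or their toric covers)" really holds with the \emph{fs} fiber product, and that $Rp_*$ genuinely commutes with the base change $X \to \AF{\NN}^k$ even though $\AF{\NN}^k$ is a stack rather than a scheme — this requires knowing the relevant pullback square is strict (the superscript $\msout{\ell}$ in Remark~\ref{rmk:logsmoothimpliesirred} is exactly the assertion that it is), so that no extra fs-saturation enters and cohomology-and-base-change applies. A secondary technical nuisance is the passage between a possibly-singular toric $\tilde Y$ and a smooth one: rather than resolving, it is cleanest to quote that proper toric morphisms between (possibly singular, but $\CC$-rational-singularity) toric varieties satisfy $Rf_*\OO = \OO$, which is classical, so that Hironaka's theorem applies directly once we are over the smooth base $\Aff^k$. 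Everything else is bookkeeping with flat base change and smooth descent for $R\pi_*$.
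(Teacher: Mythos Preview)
Your proposal is correct and follows the same strategy as the paper: localize on $X$, use a smooth chart to reduce the log blowup to a flat base change of a toric subdivision, and then invoke Hironaka via rational singularities of toric varieties. The paper's version is more direct---it takes a local smooth chart $X \to \Aff_P$ to a (possibly singular) affine toric variety and applies Hironaka to the subdivision $\tilde F \to \Aff_P$ immediately, bypassing your Artin-fan route and the descent step~(2); in particular, your appeal to Remark~\ref{rmk:logsmoothimpliesirred} to arrange a strict smooth $X \to \AF{\NN}^k$ is a slight over-reach (that would force $X$ itself to be smooth), but as you observe in step~(3) this is unnecessary since any $\Aff_P$ already has rational singularities.
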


\begin{proof}

The statement is local in $X$, so assume $X$ is a scheme with smooth global chart $X \to \Aff_P$ and $\tilde{X}$ is globally pulled back from a toric blowup $s : \tilde{F} \to \Aff_P$ of toric varieties. This reduces to the map $s : \tilde{F} \to \Aff_P$ itself, which satisfies $Rs_\ast \OO_{\tilde{F}} = \OO_{\Aff_P}$ because $\tilde{F}$ and $\Aff_P$ have rational singularities and $s$ is proper birational. 

\end{proof}

\begin{remark}
A separated DM stack has finite intertia stacks, hence a coarse moduli space:

A proper unramified map $q$ is finite: \cite[02V5]{sta} shows $q$ is locally quasifinite, \cite[01TJ]{sta} or \cite[01TD]{sta} that $q$ is quasifinite, and \cite[05K0]{sta} finally gives that $q$ is finite. A separated DM stack is defined to have proper unramified diagonal, which is then finite. 

\end{remark}

\begin{proposition}[``Hironaka's Pushforward Theorem'']\label{prop:hironakapfwdthm}
Consider a proper birational morphism $p : X \to Y$ of DM type and locally finite presentation between log smooth algebraic stacks over $\CC$. Then 
\[Rp_\ast \OO_X = \OO_Y.\]
\end{proposition}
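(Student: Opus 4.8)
The plan is to reduce the statement to the already-established Lemma~\ref{lem:hironakaforlblowups} for log blowups, plus classical Hironaka for smooth stacks, by resolving $p$ into a composite of maps we understand. First I would note that the statement $Rp_* \OO_X = \OO_Y$ is smooth-local on $Y$, so I may assume $Y$ is a log smooth scheme with a smooth global chart $Y \to \Aff_Q$, and hence $\Log Y$ has a quasicompact open chunk over which everything lives; similarly $X$ is log smooth. Since $p$ is birational, proper, DM type and of finite presentation, and both source and target are log smooth (so in particular have rational — indeed, locally, quotient-of-toric — singularities after passing to a chart), the strategy is: (1) find a common log blowup resolving the birational map $p$ on the level of Artin fans, and (2) descend that to $X$ and $Y$ and apply the two known cases.

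The key steps, in order: (i) Using Remark~\ref{rmk:qsepdefns}, replace $\Log Y$ by a quasicompact open $U_Y$ through which everything factors, so that the Artin fans $\AF{X}, \AF{Y}$ are quasicompact. (ii) By \cite[Theorem 4.6.2]{wisebounded} (as used in Remark~\ref{rmk:logsmoothimpliesirred}), choose a subdivision $\tilde F \to \AF{Y}$ that also dominates $\AF{X}$ via the map $\AF{X} \to \AF{Y}$ induced by $p$ — i.e. a common refinement of the two fan structures — together with a strict étale map $\tilde F \to \AF{\NN}^k$. Let $\tilde Y := \tilde F \times^{\msout\ell}_{\AF Y} Y$ and $\tilde X := \tilde F \times^{\msout\ell}_{\AF X} X$ be the corresponding log blowups; these are smooth stacks (again by the argument of Remark~\ref{rmk:logsmoothimpliesirred}), and the induced map $\tilde p : \tilde X \to \tilde Y$ is now a proper birational morphism of \emph{smooth} stacks, hence satisfies $R\tilde p_* \OO_{\tilde X} = \OO_{\tilde Y}$ by classical Hironaka \cite[Theorem 5.10]{kollarmoribook} (valid for stacks by resolving the map, as recalled before Lemma~\ref{lem:hironakaforlblowups}). (iii) Write $a : \tilde X \to X$ and $b : \tilde Y \to Y$ for the log blowup projections; by Lemma~\ref{lem:hironakaforlblowups}, $Ra_* \OO_{\tilde X} = \OO_X$ and $Rb_* \OO_{\tilde Y} = \OO_Y$. (iv) Now chase the square: $b \circ \tilde p = p \circ a$, so
\[
Rp_* \OO_X = Rp_* Ra_* \OO_{\tilde X} = R b_* R\tilde p_* \OO_{\tilde X} = Rb_* \OO_{\tilde Y} = \OO_Y,
\]
using $Ra_*\OO_{\tilde X}=\OO_X$ in the first step, functoriality of $R(-)_*$ in the second, $R\tilde p_*\OO_{\tilde X}=\OO_{\tilde Y}$ in the third, and Lemma~\ref{lem:hironakaforlblowups} again in the last.

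The main obstacle I expect is step (ii): producing a \emph{single} subdivision $\tilde F$ that simultaneously refines the fan of $\AF Y$ and pulls back to a log blowup of $X$ through which $\tilde p$ becomes a morphism of smooth stacks. One must check that $\AF X \to \AF Y$ is genuinely a morphism of Artin fans (using that $p$ is log étale-ish enough — here one leans on $p$ being birational and both being log smooth, so that $X$ and $Y$ have the ``same'' Artin fan up to subdivision), that the common refinement exists at the level of cone complexes, that it can be chosen with a strict étale map to $\AF{\NN}^k$ (smoothness of the resulting blowups), and that the base change along $\AF X \to \AF Y$ of the subdivision of $\AF Y$ actually recovers a blowup dominating both — i.e.\ that $\tilde X$ as defined maps to $\tilde Y$. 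A secondary subtlety is verifying that classical Hironaka vanishing holds for the Artin (non-DM) stacks $\tilde X, \tilde Y$; this is handled exactly as in the cited reduction to rational singularities, resolving the (smooth) map so that $R(-)_*\OO = \OO$, and is smooth-local so reduces to the scheme case. Once the combinatorial resolution in (ii) is in hand, the cohomological conclusion (iv) is a formal diagram chase.
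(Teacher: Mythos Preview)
Your overall strategy --- reduce to smooth source and target via log blowups using Lemma~\ref{lem:hironakaforlblowups}, then invoke Hironaka --- matches the paper's. But there is a genuine gap at the key step, and one unnecessary complication.

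The gap is your step~(iii). After reducing to smooth $\tilde X \to \tilde Y$, you invoke ``classical Hironaka (valid for stacks \ldots\ is smooth-local so reduces to the scheme case).'' This is exactly what is \emph{not} yet available. The map $p$ is only DM type, so even once $\tilde Y$ is an affine scheme, $\tilde X$ is a smooth separated DM \emph{stack}, not a scheme; smooth-localizing on the target does nothing to make the source a scheme, and you cannot smooth-localize on the source without destroying properness. The paper stresses in the introduction that Hironaka's theorem was not previously known even for ordinary stacks, and its proof supplies the missing step: pass to the coarse moduli space $\pi : \tilde X \to \overline{X}$, use finiteness of $\pi$ plus the universal property to get $R\pi_*\OO_{\tilde X}=\OO_{\overline X}$, observe that $\overline X$ has finite quotient (hence rational) singularities, resolve it by a smooth scheme $\widetilde X \to \overline X$, and only then apply classical Hironaka to the proper birational map of smooth \emph{schemes} $\widetilde X \to \tilde Y$.

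Separately, your step~(ii) is harder than necessary. You do not need a common refinement of Artin fans or a map $\AF X \to \AF Y$: simply log blow up $Y$ to a smooth $\tilde Y$, take the fs pullback $X \times_Y^\ell \tilde Y$ (still log smooth, and automatically mapping to $\tilde Y$), and then log blow that up further to a smooth $\tilde X$. The diagram chase in your step~(iv) then goes through unchanged. This is how the paper arranges the reduction, sidestepping the fan-comparison obstacle you flagged.
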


\begin{proof}

The statement is smooth-local in $Y$, so assume $Y$ is an affine scheme. Take log blowups of $X, Y$ which are smooth \cite[Theorem 5.10]{niziol} and apply Lemma \ref{lem:hironakaforlblowups} to reduce to the case where $X, Y$ have smooth underlying scheme. This implies $X$ is a separated DM stack, and we must now prove Hironaka's pushforward theorem in this case. 

The coarse moduli space $\pi : X \to \overline{X}$ exists, is a finite map, and is compatible with flat base change \cite{bconradkeelmori}. Then $R^i\pi_\ast \OO_X = 0$  by finiteness \cite[03QP]{sta} and $\pi_\ast \OO_X = \OO_{\overline{X}}$ by the universal property of $\pi$ applied to maps to $\Aff^1$ as discussed after \cite[Theorem 1.1]{bconradkeelmori}.

The coarse space $\overline{X}$ has at worst finite quotient singularities \cite[Lemma 2.2.3]{abramovich-vistoli}, which are rational singularities by \cite{kovacsquotientsingsarerational} or  \cite{boutotquotientsingsarerational}. Choose a proper birational map $t : \widetilde{X} \to \overline{X}$ with smooth source and $Rt_\ast \OO_{\widetilde{X}} = \OO_{\overline{X}}$. Then $\widetilde{X} \to Y$ is proper birational between smooth schemes, so Hironaka's original pushforward theorem applies.

\end{proof}

The interested reader can make sense of ``log rational singularities'' and generalize the proposition.

\begin{corollary}\label{cor:hironakapfwdkfundclass}
In the setting of Proposition \ref{prop:hironakapfwdthm}, pushforward identifies fundamental classes:
\[p_\ast [\OO_X] = [\OO_Y] \quad \in \HH(Y).\]
\end{corollary}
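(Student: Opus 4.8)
The plan is to deduce this immediately from Proposition~\ref{prop:hironakapfwdthm} together with the definition of the class $[\OO_X] \in \HH(X)$ and the (derived) pushforward on $\HH$-theory. First I would recall that for a proper morphism $p : X \to Y$ of the stacks under consideration, the pushforward $p_* : \HH(X) \to \HH(Y)$ is the one induced by $Rp_*$ on coherent sheaves, so that for a coherent sheaf $\mathcal{F}$ on $X$ one has $p_*[\mathcal{F}] = \sum_i (-1)^i [R^i p_* \mathcal{F}]$ in $\HH(Y)$; this uses that $Rp_*$ preserves coherence, which holds since $p$ is proper and of DM type (and our stacks are noetherian). The relevant finiteness/coherence of the higher direct images is standard and I would simply cite it.

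Next I would apply this with $\mathcal{F} = \OO_X$. By Proposition~\ref{prop:hironakapfwdthm}, $Rp_* \OO_X = \OO_Y$, meaning $p_* \OO_X = \OO_Y$ and $R^i p_* \OO_X = 0$ for all $i > 0$. Therefore $p_*[\OO_X] = [p_* \OO_X] = [\OO_Y]$ in $\HH(Y)$, which is exactly the assertion. There is essentially nothing more to do: the corollary is a direct translation of the sheaf-level identity into the Grothendieck group.

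I do not anticipate a genuine obstacle here; the only point requiring a word of care is making sure the pushforward $p_*$ on $\HH = K(\Coh(-))$ is well-defined in our generality (proper DM-type morphism of locally noetherian algebraic stacks), so that the formula $p_*[\mathcal{F}] = \sum_i (-1)^i[R^i p_* \mathcal{F}]$ makes sense. This is the content of the standard construction of pushforward in $G$-theory for proper morphisms, and I would cite \cite{fengquktheory} (or \cite{vanishingthmsnegkthy}) for it. Granting that, the proof is a one-line consequence of Proposition~\ref{prop:hironakapfwdthm}.

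\begin{proof}
The proper morphism $p$ induces a pushforward $p_* : \HH(X) \to \HH(Y)$ on $G$-theory, characterized on classes of coherent sheaves by $p_*[\mathcal{F}] = \sum_i (-1)^i [R^i p_* \mathcal{F}]$; here $Rp_*$ preserves coherence since $p$ is proper of DM type between locally noetherian stacks. Applying this to $\mathcal{F} = \OO_X$ and using $Rp_* \OO_X = \OO_Y$ from Proposition~\ref{prop:hironakapfwdthm}, so that $p_* \OO_X = \OO_Y$ and $R^i p_* \OO_X = 0$ for $i > 0$, we obtain $p_*[\OO_X] = [\OO_Y]$ in $\HH(Y)$.
\end{proof}
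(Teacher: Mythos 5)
Your proof is correct and matches the paper's (implicit) reasoning: the paper states this corollary without proof precisely because it is the immediate translation of $Rp_*\OO_X = \OO_Y$ from Proposition~\ref{prop:hironakapfwdthm} into $\HH$-theory via $p_*[\mathcal{F}] = \sum_i (-1)^i[R^ip_*\mathcal{F}]$. Nothing further is needed.
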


Hironaka's pushforward theorem details when fundamental classes are closed under pushforward. What about log virtual fundamental classes? 

\begin{proposition}\label{prop:logblowuppfwdvfcs}

Let $X \to F$ be a strict morphism of DM type to an Artin fan and $\widetilde{F} \to F$ a proper, birational, DM-type morphism. Write $\widetilde{X}$ for the pullback
\[\begin{tikzcd}
\widetilde{X} \ar[r] \ar[d] \lpbstrict       &\widetilde{F} \ar[d]      \\
X \ar[r]       &F.
\end{tikzcd}\]
This setup allows $\widetilde{X} \to X$ to be a log blowup, a root stack, or composites of such.

Suppose $f : X \to Y$ is a morphism to a \textit{log smooth} algebraic stack equipped with a log perfect obstruction theory $\Cl{X/Y} \subseteq E$ and equip $\widetilde{X} \to Y$ with the induced log perfect obstruction theory. Giving $\widetilde{X}$ the induced log perfect obstruction theory, we have
\[p_\ast \lvir{\widetilde{X}/Y} = \lvir{X/Y} \quad \in \HH(X)\]
\end{proposition}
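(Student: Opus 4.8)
The plan is to reduce the statement to the diagram defining the log Gysin map and then invoke Hironaka's pushforward theorem in the form already established. First I would choose, as in the definition of $\gys{f}$, a factorization $X \to U \to Y$ with $X \to U$ strict and $U \to Y$ log \'etale, quasiseparated, with $U$ quasicompact, so that $\Cl{X/Y} \simeq \C{X/U} \subseteq E$ and the log virtual class is computed by the ordinary Gysin diagram of $X \to U$. Since $\widetilde{X} \to X$ is pulled back from an Artin fan morphism $\widetilde{F} \to F$ along a \emph{strict} map $X \to F$, the map $\widetilde{X} \to X$ is itself strict, so $\widetilde{X} \to U$ is a composite of strict maps and the induced log perfect obstruction theory on $\widetilde{X}$ is literally $\C{\widetilde{X}/U} \simeq \C{X/U} \times_X \widetilde{X} \subseteq E|_{\widetilde{X}}$. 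Thus both sides are computed by the \emph{same} ordinary Gysin map applied to $U$, one composed with pullback along $\widetilde{X} \to X$ and then pushed forward.

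The key reduction is then: it suffices to show $p_*[\OO_{\Cl{\widetilde{X}/U}}] = [\OO_{\Cl{X/U}}]$ in $\HH(\C{X/U})$, compatibly with the embedding into $E$ and intersection with the zero section (which is an isomorphism by Remark~\ref{rmk:vbtorsorsisomkthy}, and commutes with proper pushforward). Because $X \to U$ is strict, $\C{X/U}$ is an ordinary (non-log) cone stack and $\Cl{\widetilde{X}/U} = \C{\widetilde{X}/U} = \C{X/U}\times_X \widetilde{X}$ is just its base change along the proper birational $p : \widetilde{X} \to X$. So the claim becomes: for the proper birational map $\widetilde{C} := \C{X/U}\times_X \widetilde{X} \to \C{X/U}$, one has $Rp_*\OO_{\widetilde{C}} = \OO_{\C{X/U}}$. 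I would verify this by noting that the cone stack $\C{X/U}$ is log smooth over $U$ if we equip it with its natural log structure — or, more directly, that $\widetilde{C} \to \C{X/U}$ is again pulled back from the Artin fan morphism $\widetilde{F}\to F$ along the composite $\C{X/U}\to X \to F$, hence is again a log blowup / root stack composite of a log smooth stack, so Proposition~\ref{prop:hironakapfwdthm} (or Lemma~\ref{lem:hironakaforlblowups}) applies. Actually the cleanest route: $\widetilde{X}\to X$ being a composite of log blowups and root stacks between log smooth stacks, its base change to $\C{X/U}$ is of the same type over the log smooth stack $\C{X/U}$, and Hironaka's pushforward theorem gives $Rp_*\OO_{\widetilde C}=\OO_{\C{X/U}}$, hence $p_*[\OO_{\widetilde C}]=[\OO_{\C{X/U}}]$ by Corollary~\ref{cor:hironakapfwdkfundclass}.

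Assembling: apply $p_*$ to $\lvir{\widetilde X/Y}$, which equals (zero-section intersection of) $p_*[\OO_{\Cl{\widetilde X/U}}]$ inside $\HH(E)$; by the projection/base-change compatibility of the zero-section Gysin map with proper pushforward, this becomes the zero-section intersection of $p_*[\OO_{\widetilde C}] = [\OO_{\C{X/U}}]$, which is exactly $\lvir{X/Y}$. The main obstacle I anticipate is the first bookkeeping step: making sure the \emph{induced} log perfect obstruction theory on $\widetilde X$ really does identify the relevant log normal cone with the naive base change $\C{X/U}\times_X\widetilde X$ — i.e. that forming the log normal cone commutes with the strict base change $\widetilde X \to X$, and that the cone $\C{X/U}$ (with its natural log structure) is log smooth over the log smooth base so that Hironaka applies to $\widetilde C \to \C{X/U}$. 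Once that compatibility is nailed down, the rest is a formal consequence of Proposition~\ref{prop:hironakapfwdthm} and the functoriality of Gysin maps.
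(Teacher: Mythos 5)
Your reduction breaks down at the very first bookkeeping step, and the failure is not repairable within your framework. You claim that $\widetilde{X} \to X$ is strict because it is pulled back from $\widetilde{F} \to F$ along the strict map $X \to F$. This is backwards: strictness is inherited by the \emph{other} leg, so $\widetilde{X} \to \widetilde{F}$ is strict, but $\widetilde{X} \to X$ is a log blowup or root stack and is essentially never strict (already for the blowup of the origin in $\Aff^2$ with its toric log structure, the characteristic monoid drops from $\NN^2$ to $\NN$ at general points of the exceptional divisor). Consequently $\widetilde{X} \to U$ is not strict, $\Cl{\widetilde{X}/Y}$ is not $\C{\widetilde{X}/U}$, and — the deeper problem — the identification $\Cl{\widetilde{X}/Y} \simeq \Cl{X/Y} \times_X \widetilde{X}$ is false in general: $p$ is not flat, normal cones do not commute with non-flat base change, and the induced obstruction theory is defined through the closed embedding $\Cl{\widetilde{X}/Y} \subseteq \Cl{X/Y}|_{\widetilde{X}} \subseteq E|_{\widetilde{X}}$, which is typically proper. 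If the cone literally pulled back, the proposition (and Costello-type statements generally) would be a triviality; the entire content is the discrepancy between $\Cl{\widetilde{X}/Y}$ and $\Cl{X/Y}|_{\widetilde{X}}$. Your fallback step is also unavailable: Hironaka's pushforward theorem (Proposition~\ref{prop:hironakapfwdthm}) requires log smooth source and target, and the cone $\C{X/U}$ is in general highly singular, so you cannot apply it to $\widetilde{C} \to \C{X/U}$; nor is $\widetilde{C}$ the relevant object in the first place.

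The paper's proof circumvents both issues by never comparing the relative cones over $X$ directly. Hironaka (via Corollary~\ref{cor:hironakapfwdkfundclass}) is applied only to $\widetilde{F} \to F$, a proper birational map of Artin fans, which \emph{are} log smooth; compatibility of the specialization/Gysin construction with proper pushforward for the strict square then yields $p_*[\OO_{\Cl{\widetilde{X}}}] = [\OO_{\Cl{X}}]$ for the \emph{absolute} log normal cones $\Cl{\widetilde{X}} = \C{\widetilde{X}/\widetilde{F}}$, $\Cl{X} = \C{X/F}$. This is transferred to the relative cones using the exact sequences $\Tl{Y}|_{X} \to \Cl{X/Y} \to \Cl{X}$ and $\Tl{Y}|_{\widetilde{X}} \to \Cl{\widetilde{X}/Y} \to \Cl{\widetilde{X}}$: the right-hand square of that comparison is cartesian (so $\Cl{\widetilde{X}/Y} \simeq \Cl{X/Y} \times_{\Cl{X}} \Cl{\widetilde{X}}$, fibered over the absolute cones rather than over $X$), and the projections $\Cl{X/Y} \to \Cl{X}$ induce isomorphisms on $\HH$-theory by Remark~\ref{rmk:vbtorsorsisomkthy}, giving $\widehat{t}_*[\OO_{\Cl{\widetilde{X}/Y}}] = [\OO_{\Cl{X/Y}}]$ before intersecting with the zero section of $E$. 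If you want to salvage your write-up, you must replace the false base-change identification with this (or an equivalent) mechanism for relating $\Cl{\widetilde{X}/Y}$ to $\Cl{X/Y}$.
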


\begin{proof}

Apply compatibility of pushforward and the Gysin map \cite[Proposition 2.4]{fengquktheory} and Corollary \ref{cor:hironakapfwdkfundclass} to the strict pullback square on Artin fans:
\[p_\ast [\OO_{\C{\widetilde{X}/\widetilde{F}}}] = [\OO_{\C{X/F}}] \quad \in \HH(X).\]
Note that $\C{\widetilde{X}/\widetilde{F}} = \Cl{\widetilde{X}}$, and the same for $X/F$. 
Consider the map of short exact sequences of cone stacks
\[\begin{tikzcd}
\Tl{Y}|_{\widetilde{X}} \ar[r] \ar[d]       &\Cl{\widetilde{X}/Y \times \widetilde{F}} \ar[r, "\widehat{r}"] \ar[d, "\widehat{t}"] \pb      &\Cl{\widetilde{X}} \ar[d, "t"]         \\
\Tl{Y}|_X  \ar[r]     &\Cl{X/Y \times F} \ar[r, "r"]      &\Cl{X}
\end{tikzcd}\]
from \cite[Proposition 2.5]{mythesislogprodfmla}. Remark that $\Cl{\widetilde{X}/Y \times \widetilde{F}} \simeq \Cl{\widetilde{X}/Y}$, etc. After pulling back the bottom row to $\widetilde{X}$, the leftmost vertical map $\Tl{Y}|_{\widetilde{X}} \to \Tl{Y}|_X$ becomes an isomorphism; thus the right square is a pullback. Remark \ref{rmk:vbtorsorsisomkthy} attests $r^\ast, \widehat{r}^\ast$ are isomorphisms, so the commutative square
\[\begin{tikzcd}
\HH(\Cl{\widetilde{X}/Y}) \ar[r, "\widehat{t}_\ast"]       &\HH(\Cl{X/Y})      \\
\HH(\Cl{\widetilde{X}}) \ar[r, "t_\ast"] \ar[u, "\widehat{r}^\ast"]\ar[u, swap, "\sim"]         &\HH(\Cl{X}) \ar[u, "r^\ast"] \ar[u, swap, "\sim"]
\end{tikzcd}\]
coming from compatibility of pullback and pushforward and the equalities $r^\ast[\OO_{\Cl{X}}] = [\OO_{\Cl{X/Y}}]$ etc. imply 
\[\widehat{t}_\ast [\OO_{\Cl{\widetilde{X}/Y}}] = [\OO_{\Cl{X/Y}}]. \]
Composing with the inclusions into the obstruction theories and the Gysin map of the zero section of the obstruction theories, we get our result. 

\end{proof}

The rest of this section concerns Costello's formula, which will be half of our proof of the log product formula. 

\begin{construction}\label{const:artinfanbiratl}

Suppose $f : X \to Y$ is a DM type map between quasicompact log algebraic stacks and $Y$ is log smooth. Quasicompact log algebraic stacks have smooth-locally connected log strata, so \cite[Proposition 3.2.1, Proposition 3.3.2]{wisebounded} produces an Artin fan $Y \to \AF{Y}$ and a relative Artin fan $\AF{X/Y}$ for the pair, both quasicompact:
\[\begin{tikzcd}
X \ar[r] \ar[dr]       &W \ar[r] \ar[d] \lpbstrict      &\AF{X/Y} \ar[d] \ar[r]       &\Log^1 \ar[d]            \\
        &Y \ar[r]      &\AF{Y} \ar[r]         &\Log.
\end{tikzcd}\]    
The map $\AF{X/Y} \to \AF{Y}$ is log \'etale, $W$ is log smooth, and $X \to W$ is strict. All the maps are DM type because Olsson showed $\Log^1 \to \Log$ is and the maps $\AF{Y} \to \Log$, $\AF{X/Y} \to \Log^1$ are representable by construction. The Artin fan of a fine log algebraic stack is locally noetherian. The map $Y \to \AF{Y}$ is smooth and thus noetherian; the same argument shows $W$ is noetherian. 

\end{construction}

Costello's original pushfoward formula \cite[Theorem 5.0.1]{costello} is incorrect as stated; see \cite{mycostellogeneralization}. We prove a $\HH$-theoretic version of Costello's corrected pushforward formula in pure degree one:

\begin{theorem}[``Costello's pushforward formula in $\HH$-theory'']\label{thm:costellopfwdkthy}
Consider an fs pullback square of DM type maps between locally noetherian, locally finite type log algebraic stacks over $\CC$:
\[\begin{tikzcd}
X' \ar[d, "f'"] \ar[r, "p"] \lpb      &X \ar[d, "f"]      \\
Y' \ar[r, "q"]      &Y.
\end{tikzcd}\]

Suppose $f$ has a log perfect obstruction theory $\Cl{X/Y} \subseteq E$ and endow $f'$ with the induced log perfect obstruction theory $\Cl{X'/Y'} \subseteq \Cl{X/Y}|_{X'} \subseteq E|_{X'}$. Assume $Y', Y$ are log smooth, $X$ is quasicompact, and $q$ is proper birational. Then
\[p_\ast \lvir{X'/Y'} = \lvir{X/Y} \quad \in \HH(X).\]
\end{theorem}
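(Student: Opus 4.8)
The plan is to reduce the statement to Hironaka-type pushforward via the relative Artin fan machinery of Construction~\ref{const:artinfanbiratl}. First I would apply that construction to $f : X \to Y$, obtaining a factorization $X \to W \to \AF{Y}$ with $X \to W$ strict, $W \to \AF{Y}$ log smooth, and $W$ log smooth (since $Y$ is). Pulling the whole diagram back along $q : Y' \to Y$ and using that fs pullback commutes with these constructions, I get a compatible relative Artin fan for $f'$: a factorization $X' \to W' \to \AF{Y'}$ with $W' = W \times^\ell_{\AF{Y}} \AF{Y'}$ (equivalently $W' = W \times^\ell_Y Y'$), $X' \to W'$ strict, and $W'$ log smooth. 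The induced log perfect obstruction theory on $f'$ is then $\C{X'/W'} \simeq \Cl{X'/Y'} \subseteq E|_{X'}$, matching the one in the statement.

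The next step is to analyze the map $W' \to W$. Since $q$ is proper birational and $W \to Y$ is log smooth (hence flat, via the Artin fan), the base change $W' \to W$ is proper; birationality needs a separate argument using that $q$ restricts to an isomorphism over a dense open $V \subseteq Y$ and that log smooth maps to a log smooth base are "generically strict" so the fs and ordinary pullbacks agree over $V$ — here the "3 for 2" property of birationality and the fact that $W, W'$ are log smooth (hence irreducible on connected components, by Remark~\ref{rmk:logsmoothimpliesirred}) are what make $W' \to W$ genuinely proper birational between log smooth stacks. Then Proposition~\ref{prop:hironakapfwdthm} gives $R(W' \to W)_* \OO_{W'} = \OO_W$, i.e.\ $(W' \to W)_* [\OO_{W'}] = [\OO_W]$ in $\HH(W)$ by Corollary~\ref{cor:hironakapfwdkfundclass}.

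Finally I would run the usual deformation-to-the-normal-cone comparison, exactly as in the proof of Proposition~\ref{prop:logblowuppfwdvfcs} but with the log blowup $\widetilde{F} \to F$ replaced by $\AF{Y'} \to \AF{Y}$ (or rather by $W' \to W$, after passing to the strict covers $X \to W$, $X' \to W'$). Because $X \to W$ and $X' \to W'$ are strict, $\gys{f}[\OO_W] = [\OO_{\C{X/W}}] = [\OO_{\Cl{X/Y}}]$ and likewise for $X'$, so the log virtual classes are fundamental classes of log normal cones (Remark~\ref{rmk:gysinmapdefnbasicprops}). One then forms the fs pullback square relating $\C{X'/W'}$, $\C{X/W}$ and uses compatibility of proper pushforward with the Gysin/zero-section maps \cite[Proposition 2.4]{fengquktheory}, combined with $(W' \to W)_*[\OO_{W'}] = [\OO_W]$, to conclude $p_*[\OO_{\Cl{X'/Y'}}] = [\OO_{\Cl{X/Y}}]$; composing with the inclusions into $E|_{X'} \subseteq E$ and the Gysin map of the zero section of $E$ yields $p_* \lvir{X'/Y'} = \lvir{X/Y}$.

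The main obstacle I expect is verifying that $W' \to W$ really is proper \emph{birational} between log smooth stacks — i.e.\ controlling the fs fiber product so that it does not introduce extra components or fail to be an isomorphism over the locus where $q$ is, and checking $W'$ is log smooth even though $W \to \AF{Y}$ is only log smooth rather than smooth. The quasicompactness hypothesis on $X$ and the finite-type hypotheses are presumably there precisely to keep $W, W'$ noetherian so that Hironaka's theorem and the excision sequences underlying the Gysin maps all apply; handling the non-quasicompact ambient stacks $\Log Y$, $\AF{Y}$ by the standard "replace by a quasicompact open" dodge of Remark~\ref{rmk:qsepdefns} is routine but must be done carefully so that all the squares remain fs-cartesian.
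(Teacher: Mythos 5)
Your reduction is the same as the paper's: factor $X \to W \to Y$ through the relative Artin fan of Construction~\ref{const:artinfanbiratl}, pull back along $q$ to get $X' \to W' \to Y'$ with $X' \to W'$ strict, prove $s : W' \to W$ is proper birational between log smooth stacks, apply Corollary~\ref{cor:hironakapfwdkfundclass} to get $s_*[\OO_{W'}] = [\OO_W]$, and conclude via the ordinary obstruction theory $\C{X/W} \simeq \Cl{X/Y}$ and compatibility of Gysin maps with proper pushforward \cite[Proposition 2.4]{fengquktheory}. That last stage of your argument is fine. The gap is exactly where you predicted it: you do not actually prove that $s : W' \to W$ is birational, and the sketch you offer in its place does not close it. First, the parenthetical ``$W \to Y$ is log smooth (hence flat, via the Artin fan)'' is false: $W \to Y$ is only log \'etale, and log \'etale maps (log blowups being the basic example) are generally not flat; properness of $s$ should instead be argued, as the paper does, from properness of $W \times^{sch}_Y Y' \to W$ together with finiteness of $W' \to W \times^{sch}_Y Y'$ \cite[Proposition III.2.1.5]{ogusloggeom}. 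Second, and more seriously, the ``generically strict'' argument only says that over a dense open of $Y$ where everything has trivial log structure the fs and schematic pullbacks agree; it does not show that the isomorphism locus has \emph{dense} preimage in $W'$, i.e.\ that the fs fiber product $W' = W \times^\ell_Y Y'$ does not acquire components sitting entirely over the boundary strata, nor that each component of $W'$ dominates a component of $W$. Since $W \to Y$ is not flat or open, density statements do not simply pull back, and this is precisely the failure mode that makes the claim nontrivial.

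The paper closes this by a flattening argument you do not mention: working smooth-locally, it invokes Kato's theorem \cite{blowupintegralfkato} to find log blowups $\tilde W \to W$, $\tilde Y \to Y$ with $\tilde W \to \tilde Y$ integral, hence (being also log \'etale) flat \cite[Theorem IV.4.3.5(1)]{ogusloggeom}; then the fs pullback along $\tilde W \to \tilde Y$ behaves like flat base change, so $\tilde s : \tilde W' \to \tilde W$ is birational because $\tilde Y' \to \tilde Y$ is (using \cite[Proposition 4.3]{niziol} for the log blowups), and the ``3 for 2'' property of birationality transports the conclusion back to $s$. Without this step, or some substitute that controls the fs fiber product away from the generic locus (your irreducibility appeal via Remark~\ref{rmk:logsmoothimpliesirred} helps with $W$ but says nothing about the components of $W'$), the application of Proposition~\ref{prop:hironakapfwdthm} to $s$ is unjustified, and with it the whole conclusion $p_* \lvir{X'/Y'} = \lvir{X/Y}$.
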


\begin{proof}

The proof is a global verion of the argument in \cite[Theorem 4.1]{mythesislogprodfmla}. Construction \ref{const:artinfanbiratl} furnishes us with a factorization $X \to W \to Y$ with $X \to W$ strict and $W \to Y$ log \'etale. Take the fs pullback:
\[\begin{tikzcd}
X' \ar[r] \ar[d] \lpbstrict      &X \ar[d]      \\
W' \ar[r, "s"] \ar[d] \lpb      &W \ar[d]      \\
Y' \ar[r]      &Y.
\end{tikzcd}\]

\textbf{Claim:} $s : W' \to W$ is proper birational.

The schematic fiber product $W \times^{sch}_Y Y' \to W$ is proper and $W' \to W \times^{sch}_Y Y'$ is finite \cite[Proposition III.2.1.5]{ogusloggeom}. 

Birationality of $s$ is smooth-local in $W$ and $Y$, so assume both are affine schemes. Find a commutative square 
\[\begin{tikzcd}
\tilde{W} \ar[r] \ar[d]       &W \ar[d]      \\
\tilde{Y} \ar[r]       &Y
\end{tikzcd}\]
with horizontal maps log blowups such that $\tilde{W} \to \tilde{Y}$ is integral \cite[Theorem]{blowupintegralfkato}. Take fs pullbacks $\tilde{W}' := W' \times^\ell_W \tilde{W}, \tilde{Y}' := Y' \times^\ell_Y \tilde{Y}$ to obtain
\[\begin{tikzcd}
\tilde{W}' \ar[r, "\tilde{s}"] \ar[d] \lpb      &\tilde{W} \ar[d]      \\
\tilde{Y}' \ar[r]      &\tilde{Y}.
\end{tikzcd}\]
The 3 for 2 property of birationality and \cite[Proposition 4.3]{niziol} reduce us to showing $\tilde{s}$ is birational and ensure that $\tilde{Y}' \to \tilde{Y}$ is. We conclude by observing the log \'etale and integral $\tilde{W} \to \tilde{Y}$ is flat \cite[Theorem IV.4.3.5(1)]{ogusloggeom}. 

Corollary \ref{cor:hironakapfwdkfundclass} shows $s_\ast [\OO_{W'}] = [\OO_W]$. Remark $\Cl{X/Y} \simeq \C{X/W}$, so we have an ordinary obstruction theory for $X \to W$. Commutativity of Gysin maps and pushforward \cite[Proposition 2.4]{fengquktheory} gives 
\[p_\ast \vir{X'/W'} = \vir{X/W}\]
and this translates to the statement above via the identifications $\Cl{X/Y} \simeq \C{X/W} \subseteq E$, etc. 

\end{proof}

\begin{remark}[Due to G.\ Martin] \label{rmk:costellopuredegreed}
One might wonder whether the analogous equality 
\[p_\ast \lvir{X'/Y'} = d \cdot \lvir{X/Y} \quad \in \HH(X)\]
holds for proper maps $q : Y' \to Y$ that are of pure degree $d$ instead of birational. This is false even for $f = id_Y$ in ordinary $K$ Theory.

Consider the $k$-algebra 
\[
A = k[x_1, x_2, x_3]/(x_1^3 + x_2^3 + x_3^3)
\]
and its spectrum $Z = \Spec A$. Let $X$ be the minimal resolution of $Z$ given by blowing up the ideal $(x_1,x_2,x_3)$:
\[X := {\rm Proj}\Big(A[y_1,y_2,y_3]/(x_1y_2-x_2y_1,x_1y_3-x_3y_1,x_2y_3-x_3y_2) \Big).\]

The composite $q : X \to Y = \Aff^2 = \Spec k[x_1, x_2]$ of the blowup and the natural projection is of pure degree 3. Now compute $R^iq_\ast(\OO_X) = H^i(\OO_X)$ using the \v{C}ech complex of the open cover $U_i=(y_i \neq 0) \subset X, i = 1, 2, 3$:
\[
0 \rightarrow C^0(\mathcal{U},\mathcal{O}_X) \rightarrow C^1(\mathcal{U},\mathcal{O}_X) \rightarrow C^2(\mathcal{U},\mathcal{O}_X) \rightarrow 0,
\]
with cohomology
\[\begin{split}
H^0(X,\mathcal{O}_X) &= A\cdot (1 \oplus 1 \oplus 1), \\
H^1(X,\mathcal{O}_X) &= A \cdot \left( \frac{y_1^2}{y_2y_3}\oplus \frac{-y_2^2}{y_1y_3} \oplus \frac{y_3^2}{y_1y_2}      \right),  \\ H^2(X,\mathcal{O}_X) &=0.
\end{split}\]
Conclude that
\[Rq_\ast \OO_X = [\tilde{A}] - [\tilde{A}] + 0 = 0 \neq 3 \cdot [\OO_Y].\]

\end{remark}

\section{Log $\HHl$-theory}\label{section:loghhltheory}

This section emerged from conversations with Sam Molcho and Jonathan Wise.

Recall that a morphism $\tilde{X} \to X$ of log algebraic stacks is called a \textit{log blowup} if, strict \'etale locally on $X$, it is the fs pullback of a subdivision of a toric variety at a coherent monoidal ideal. 

One can equivalently ask that $X \to \Aff_P$ be strict or $X$ be an atomic neighborhood by localizing further, or pull back instead from a subdivision of toric stacks $\AF{\Sigma} \to \AF{P}$. Write $\Sub{X}$ for the category of log blowups $\tilde{X} \to X$ of a log algebraic stack $X$, for example if $X$ is an Artin fan. Log blowups are monomorphisms among fs log stacks and fs fiber products preserve log blowups, so $\Sub{X}$ is a cofiltered preorder. All $X$-morphisms $\tilde{X}_1 \to \tilde{X}_2$ between log blowups of $X$ are themselves log blowups.

Recall that a functor $p : I \to J$ is \textit{initial} if the comma category 
\[(p/j) := \{i \to i' \, | \, p(i) \to p(i') \text{ lies over }j\} \]
is nonempty and connected for each $j \in J$ \cite{nlab:final_functor}. This is the dual to concepts variously called ``final'' and ``cofinal'' in the literature and has nothing to do with initial objects in functor categories. If $p$ is initial and $f : J \to C$ any functor, the natural map 
\[\lim_J f \to \lim_{I} f \circ p\]
is an isomorphism. A subcategory is an \textit{initial system} if the inclusion functor is initial.

A map $f : X \to Y$ of log algebraic stacks induces a functor $f^\ast : \Sub{Y} \to \Sub{X}$ sending $\tilde{Y} \to Y$ to $\tilde{Y} \times^\ell_Y X \to X$. If $f$ is itself a log blowup, the map $f_! : \Sub{X} \to \Sub{Y}$ sending $\tilde{X} \to X$ to the composite $\tilde{X} \to Y$ is a section of $f^\ast$ and each is initial. The key technical observation of this section is that $f^\ast$ is initial for $f$ strict.

\begin{definition}
Define \textit{log $\HHl$-theory} of a log algebraic stack $X$ 
\[\HHl(X) := \lim \limits_{\widetilde{X} \to X} \HH(\widetilde{X})\]
as the inverse limit under pushforwards along log blowups of $X$. It has natural maps $\HHl(X) \to \HH(X)$ and $\HHl(X) \to \HH(\widetilde{X})$ for any log blowup $\widetilde{X} \to X$. 
\end{definition}

\begin{remark}
Given a proper morphism $p : X \to Y$, compose the natural map on limits with the levelwise pushforward
\[\lim_{\tilde{X} \in \Sub{X}} \HH(\tilde{X}) \to \lim_{\tilde{Y} \in \Sub{Y}} \HH(\tilde{Y} \times^\ell_Y X) \to \lim_{\tilde{Y} \in \Sub{Y}} \HH(\tilde{Y})\]
to get a morphism $p_\ast : \HHl(X) \to \HHl(Y)$. If $\Sub{Y} \to \Sub{X}$ is initial, $f$ has a log perfect obstruction theory, and levelwise Gysin Maps are compatible with pushforwards, we will also have Gysin maps $\gys{f} : \HHl(Y) \to \HHl(X)$. A similar definition $\lim_{\tilde{X} \in \Sub{X}} A_\ast(\tilde{X})$ for Chow groups was made in \cite{holmespixtonschmitt}.

Log $K^\dagger$-theory can likewise be defined as the colimit under pullbacks of log blowups as in \cite{logmotives}, \cite{logchowrecentpaper} using $K^\circ$-theory. One could instead take Gysin maps as the transition morphisms, provided you require the total spaces to be smooth and use the canonical obstruction theory of an l.c.i. There are natural variants taking (co)limits over log blowups as well as root stacks. 
\end{remark}

\begin{remark}
Lemma \ref{lem:hironakaforlblowups} ensures that the class $([\OO_{\tilde{X}}])_{\tilde{X} \in \Sub{X}}$ is well-defined in $K_\dagger(X)$ for log smooth $X$ over $\CC$. Note this class pushes forward to the ordinary fundamental class in $\HH(X)$. 
\end{remark}

\begin{definition}\label{def:lvirwelldefpfwd}

Suppose $f : X \to Y$ is equipped with a log perfect obstruction theory and $Y$ is log smooth. Define the \textit{log virtual fundamental class} in $\HHl$-theory to be the sequence
\[(\lvir{\tilde{X}/Y})_{\tilde{X} \in \Sub{X}} \quad \in \HHl(X).\]
Proposition \ref{prop:logblowuppfwdvfcs} verifies this sequence lies in the limit $\HHl(X)$. One can similarly define $\gys{f}[\OO_V]$ for log smooth stacks $V \to Y$ but we don't define this operation in full generality on $\HHl$. 

\end{definition}

\begin{lemma}\label{lem:subdivsofconesaregloballyrefined}
Suppose $E \to F$ is a strict map of quasicompact Artin fans and $\tilde{E} \to E$ a subdivision. There exists a subdivision $\tilde{F} \to F$ such that the pullback refines $\tilde{E}$:
\[\begin{tikzcd}
        &\tilde{F} \times_F^{\msout{\ell}} E \ar[d] \ar[dl, dashed]         \\
\tilde{E} \ar[r]         &E.
\end{tikzcd}\]
\end{lemma}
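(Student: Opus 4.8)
The statement asks for a global refinement: given a strict map of quasicompact Artin fans $E \to F$ and a subdivision $\tilde{E} \to E$, find a subdivision $\tilde{F} \to F$ whose pullback to $E$ refines $\tilde{E}$. My plan is to reduce to a combinatorial statement about fans and cone complexes using the strict \'etale local structure of Artin fans. Since $E$ and $F$ are quasicompact, they each admit a finite strict \'etale cover by Artin cones $\AF{\sigma}$, and the strictness of $E \to F$ means the monoid data glue along the diagonals; concretely, $E$ and $F$ correspond to cone complexes (or generalized cone complexes / fans in the sense of \cite{wisebounded}), and $E \to F$ is an embedding of the cone complex of $E$ into that of $F$ as a union of faces.

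\textbf{Key steps.} First I would unwind the correspondence between Artin fans and (generalized) cone complexes — as set up in \cite[Section 2]{wisebounded} — so that a subdivision $\tilde{E} \to E$ is literally a subdivision of the cone complex $\Sigma_E$, i.e.\ a refinement of the fan structure on each cone of $E$ that is compatible along faces. Second, I would work cone by cone: for each maximal cone $\tau$ of $\Sigma_F$, the subdivision $\tilde{E}$ restricts to a subdivision of the union of those faces of $\tau$ that lie in $\Sigma_E$. I then need to extend this to a subdivision of all of $\tau$, compatibly over the various $\tau$'s. The standard way to do this is to pick, for each ray (or more generally each cone) of $\tilde{E}$, a lattice point generating it, throw all these points into $\tau$, and take the coarsest fan refining both $\tau$ and the star of these points — for instance by taking a common refinement with the barycentric-type subdivision generated by these points, or by invoking the existence of projective (hence combinatorially described) subdivisions containing a prescribed set of cones \cite[Theorem 4.6.2]{wisebounded}. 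Third, I would check that these cone-by-cone choices can be made compatibly along the faces shared by adjacent maximal cones of $\Sigma_F$: since $\tilde{E}$ is already a coherent subdivision of $\Sigma_E$, the data being extended already agrees on overlaps, and a common-refinement construction preserves this. Finally, I would descend: the resulting subdivided cone complex corresponds to a subdivision $\tilde{F} \to F$ of Artin fans, and by construction the pullback $\tilde{F} \times_F^{\msout{\ell}} E$ has underlying cone complex refining $\Sigma_{\tilde E}$, which is exactly the dashed arrow.

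\textbf{Main obstacle.} The genuinely delicate point is the compatibility of the cone-by-cone extensions along the faces of $\Sigma_F$ — i.e.\ ensuring the local subdivisions glue to a global subdivision $\tilde{F}$ rather than merely existing over each maximal cone. For ordinary fans this is automatic because a subdivision is determined by its cones, but Artin fans may have nontrivial automorphisms and self-gluings (they are generalized cone complexes / stacky fans), so one must check the chosen refinement is invariant under these identifications. I expect to handle this by making the construction canonical: take $\tilde F$ to be the coarsest subdivision of $F$ through which $\tilde E \to E \hookrightarrow F$ factors, whose existence can be argued by the same quasicompactness plus \cite[Theorem 4.6.2]{wisebounded} input used in Remark~\ref{rmk:logsmoothimpliesirred}, and canonicity forces automorphism-invariance. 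A secondary technical point, routine but worth stating, is that $\tilde F \times_F^{\msout{\ell}} E$ really computes the restriction of the subdivision to $E$ — this is where strictness of $E \to F$ is used, since for strict maps the fs pullback of a log blowup agrees with the naive one, exactly as in the notation conventions of the introduction.
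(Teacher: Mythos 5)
Your overall strategy---translate to (generalized) cone complexes, use strictness to see each cone of $E$ as a face of a cone of $F$, extend the subdivision cone by cone, then glue---is in the right spirit but genuinely different from the paper's argument, which first applies \cite[Theorem 4.6.2]{wisebounded} to subdivide $F$ so that it carries a strict (necessarily \'etale) map to $\AF{\NN}^k$, reduces $E$ to a single Artin cone sitting as a subcone of $\AF{\NN}^k$, and then cites \cite[Proposition 4.6]{logderivedmckay} for an initial system of subdivisions of $\AF{\NN}^k$, so that no gluing over $F$ ever has to be performed by hand. Measured against that, your proposal has two genuine gaps, both concentrated at the point you yourself flag as the main obstacle. (A smaller inaccuracy: a strict map of Artin fans need not embed $\Sigma_E$ into $\Sigma_F$ as a union of faces---think of strict \'etale covers, or two cones of $E$ hitting the same face of $F$---though this only forces you to take common refinements and does not by itself derail the plan.)

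First, the cone-by-cone extension recipe is not justified as stated: adjoining the rays of $\tilde E$ and performing stellar subdivisions need not refine $\tilde E$, since a subdivision is not determined by its rays (distinct triangulations of a cone can have identical ray sets). The variant with an interior lattice point of each cone of $\tilde E$ can be made to work (e.g.\ stellar subdivisions in decreasing order of dimension, or: refine $\tilde E$ by a coherent subdivision given by a convex rational PL function $\phi$ on the face and subdivide $\tau$ by the linearity domains of $\phi\circ\pi$ for a rational linear retraction $\pi$ onto that face), but some such argument must actually be supplied, and \cite[Theorem 4.6.2]{wisebounded} does not assert the existence of projective subdivisions containing prescribed cones, so that citation does not fill the hole. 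Second, and more seriously, your canonicity fix for the gluing/automorphism problem fails: the ``coarsest subdivision of $F$ through which $\tilde E \to E \to F$ factors'' is $F$ itself (the identity subdivision trivially admits such a factorization), and in any case a factorization $\tilde E \to \tilde F$ over $F$ says that $\tilde E$ refines $\tilde F \times_F^{\msout{\ell}} E$---the opposite of what the Lemma demands. The object you would actually need, a \emph{coarsest} $\tilde F$ whose pullback refines $\tilde E$, is not known to exist: the collection of admissible $\tilde F$ is stable under further refinement, not under coarsening, so canonicity cannot be invoked to force invariance under the self-identifications of a generalized cone complex. As it stands, the compatibility of your cone-by-cone extensions along shared faces of $\Sigma_F$ (which may not even lie in $\Sigma_E$, so nothing constrains the two extensions there) is unproven, and this is exactly the difficulty the paper's reduction to $\AF{\NN}^k$ is designed to bypass.
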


\begin{proof}

Reduce to the case $F = \AF{\NN}^k$ using \cite[Theorem 4.6.2]{wisebounded} and $E = \AF{\sigma}$ a single Artin cone. The strict map $\AF{\sigma} \to \AF{\NN}^k$ is necessarily an isomorphism onto a subcone. Then \cite[Proposition 4.6]{logderivedmckay} exhibits an initial system of subdivisions of $\AF{\NN}^k$. 

\end{proof}

All log blowups are refined by pulling back subdivisions of the Artin fan $\AF{X}$ of $X$ if $X$ is quasicompact. This is because one can assume $X$ is atomic and the strict map $X \to \AF{P}$ induces an isomorphism $P \simeq \Gamma(\overline{M}_X)$. We stop short of showing all log blowups are pulled back from the Artin fan because doing so would require gluing the above local subdivisions, but Lemma \ref{lem:subdivsofconesaregloballyrefined} refines each local subdivision of a cone of $\AF{X}$ by a global subdivision.

\begin{lemma}\label{lem:strictmapinitialpbsubdivisions}
Suppose $f : X \to Y$ is a strict map of quasicompact log algebraic stacks. The functor $f^\ast : \Sub{Y} \to \Sub{X}$ is initial. 
\end{lemma}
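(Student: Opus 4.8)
The plan is to reduce the statement to its counterpart on Artin fans, Lemma~\ref{lem:subdivsofconesaregloballyrefined}. Since $\Sub{Y}$ and $\Sub{X}$ are cofiltered preorders, unwinding the definition of \emph{initial} shows that for each log blowup $\tilde X \to X$ only two things need checking: that the comma category $(f^*/\tilde X)$ is nonempty, and that it is connected. An object of $(f^*/\tilde X)$ is simply a log blowup $\tilde Y \to Y$ such that $f^*\tilde Y := \tilde Y \times_Y^\ell X$ admits an ($X$-)morphism to $\tilde X$, this morphism being automatically unique since $\Sub{X}$ is a preorder; morphisms in $(f^*/\tilde X)$ are just morphisms in $\Sub{Y}$ between such objects. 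I would dispatch connectedness first, and formally: given two objects $\tilde Y_1, \tilde Y_2$, their fs fiber product $\tilde Y_3 := \tilde Y_1 \times_Y^\ell \tilde Y_2$ again lies in $\Sub{Y}$, $f^*\tilde Y_3$ refines $f^*\tilde Y_1$ and hence $\tilde X$, and $\tilde Y_3$ maps to each $\tilde Y_i$. Thus $(f^*/\tilde X)$ is itself a cofiltered preorder, in particular connected, as soon as it is nonempty.

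The entire content is therefore nonemptiness: given a log blowup $\tilde X \to X$, I must produce a log blowup $\tilde Y \to Y$ whose fs pullback to $X$ dominates $\tilde X$. Here I would pass to Artin fans. Set $E := \AF{X}$ and $F := \AF{Y}$, which are quasicompact because $X$ and $Y$ are, and use functoriality of Artin fans to obtain a map $E \to F$ through which $X \to Y \to F$ factors as $X \to E \to F$. The first key point is that $E \to F$ is \emph{strict}: since $f$ is strict we have $\overline M_X \cong f^{-1}\overline M_Y$, so $E$ and $F$ carry the ``same'' monoidal data, and strictness can be verified \'etale-locally using atomic charts $U \to \AF{\Gamma(\overline M_U)}$ of $X$ lying over atomic charts of $Y$. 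The second ingredient is the discussion preceding this lemma: because $X$ is quasicompact, there is a subdivision $\tilde E \to E$ such that $\tilde E \times_E^{\msout{\ell}} X \to X$ refines $\tilde X$.

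Now I would apply Lemma~\ref{lem:subdivsofconesaregloballyrefined} to the strict map $E \to F$ of quasicompact Artin fans and the subdivision $\tilde E \to E$, obtaining a subdivision $\tilde F \to F$ whose fs pullback $\tilde F \times_F^{\msout{\ell}} E$ refines $\tilde E$. I then set $\tilde Y := \tilde F \times_F^\ell Y$; this is a log blowup of $Y$, being the fs pullback of a subdivision of the Artin fan along the strict structure map $Y \to F$. Using the factorization $X \to E \to F$ one computes
\[\tilde Y \times_Y^\ell X = \tilde F \times_F^\ell X = \bigl(\tilde F \times_F^{\msout{\ell}} E\bigr) \times_E^\ell X,\]
which refines $\tilde E \times_E^{\msout{\ell}} X$, which in turn refines $\tilde X$. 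Hence $\tilde Y$ furnishes an object of $(f^*/\tilde X)$, completing the argument.

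I expect the main obstacle to be the middle step: the necessity of routing through $E = \AF{X}$ rather than pulling subdivisions directly off $F = \AF{Y}$ along the (also strict) composite $X \to Y \to F$. The point is that the local atomic argument only identifies the subdivisions responsible for $\tilde X$ with subdivisions of $\AF{X}$, and the induced map $\AF{X} \to \AF{Y}$ need not be an open immersion on cones; what rescues the argument is precisely that this map is \emph{strict}, so Lemma~\ref{lem:subdivsofconesaregloballyrefined} transports a subdivision of $\AF{X}$ refining $\tilde X$ to one of $\AF{Y}$ without destroying that property. Verifying strictness of $\AF{X} \to \AF{Y}$, and pinning down the precise functoriality of the Artin fan being invoked, is the part that will demand the most care.
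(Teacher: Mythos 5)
Your argument is essentially the paper's own proof: reduce to log blowups of $X$ pulled back from subdivisions of $\AF{X}$ (an initial system since $X$ is quasicompact), note the induced map $\AF{X} \to \AF{Y}$ is strict with quasicompact source and target, and conclude via Lemma~\ref{lem:subdivsofconesaregloballyrefined}. The extra details you supply (connectedness of the comma categories via fs fiber products, and the explicit computation that $\tilde{F} \times_F^\ell Y$ pulls back to a refinement of $\tilde{X}$) are correct elaborations of steps the paper leaves implicit.
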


\begin{proof}

Consider a log blowup $\tilde{X} \to X$ that we wish to refine. Assume it is pulled back from a subdivision of $\AF{X}$ because these form an initial system as remarked above. The induced map $\AF{X} \to \AF{Y}$ is strict with quasicompact source and target and Lemma \ref{lem:subdivsofconesaregloballyrefined} concludes.

\end{proof}

\begin{proposition}\label{prop:strlblepistalks}
Suppose a quasicompact DM type morphism $f : X \to Y$ of log algebraic stacks with $Y$ quasicompact induces epimorphisms
\[\overline{M}_{Y, f(\overline{x})} \to \overline{M}_{X, \overline{x}}\]
on stalks at geometric points $\overline{x} \to X$. There is a log blowup $\tilde{Y} \to Y$ such that the fs pullback
\[\begin{tikzcd}
\tilde{X} \ar[r, "\tilde{f}"] \ar[d, "p", swap] \lpb           &\tilde{Y} \ar[d, "q"]      \\
X \ar[r, "f", swap]       &Y
\end{tikzcd}\]
has $\tilde{f}$ strict. 
\end{proposition}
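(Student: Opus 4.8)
The plan is to reduce the strictness of $\tilde f$ to a combinatorial statement about subdivisions of a single cone, settle that statement, and then globalize by a quasicompactness argument modeled on the proof of Lemma~\ref{lem:strictmapinitialpbsubdivisions}.

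First I would pass to geometric points, since strictness of $\tilde f$ can be checked on stalks of characteristic sheaves. Fix $\bar x \to X$ over $\bar y := f(\bar x)$, and write $\sigma := \Hom(\overline M_{Y,\bar y}, \mathbb R_{\ge 0})$ for the cone dual to the stalk. The hypothesis that $\overline M_{Y,\bar y} \to \overline M_{X,\bar x}$ is surjective dualizes to a monomorphism of cones, and I would identify its image as $\tau := \sigma \cap W$, where $W := \Hom(\overline M_{X,\bar x}^{gp}, \mathbb R)$ is the subspace of $\Hom(\overline M_{Y,\bar y}^{gp}, \mathbb R)$ cut out by the kernel of $\overline M_{Y,\bar y}^{gp} \to \overline M_{X,\bar x}^{gp}$; surjectivity is exactly what forces every element of $\sigma \cap W$ to be realized by a functional nonnegative on $\overline M_{X,\bar x}$. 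Taking an atomic chart $Y \to \AF{\overline M_{Y,\bar y}}$ near $\bar y$, any log blowup $\tilde Y \to Y$ is \'etale-locally pulled back from a subdivision $\Sigma$ of $\sigma$, and the fs pullback $\tilde X = \tilde Y \times^\ell_Y X \to X$ is then \'etale-locally pulled back from the restricted subdivision $\{\rho \cap \tau : \rho \in \Sigma\}$ of $\tau$. The crux of this step is to show that if $\Sigma$ restricts to $\tau$ as a \emph{subfan} --- i.e.\ every $\rho \cap \tau$ is a face of $\rho$, hence itself a member of $\Sigma$ --- then $\tilde f$ is strict over $\bar x$: a point of $\tilde X$ in the stratum of a cone $\delta \subseteq \tau$ maps to the point of $\tilde Y$ in the stratum of the \emph{same} cone $\delta$, and the identification $\Hom(\overline M_{X,\bar x}^{gp},\mathbb Z) = \Hom(\overline M_{Y,\bar y}^{gp},\mathbb Z)\cap W$ makes the induced map on characteristic monoids $(\delta^\vee\cap M)/(\delta^\perp\cap M)$ an isomorphism.

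Next I would supply the combinatorial input: given a rational polyhedral cone $\sigma$ and a rational subspace $W = \{\ell_1 = \dots = \ell_k = 0\}$, subdividing $\sigma$ so that each $\ell_i$ has constant sign on every cone $\rho$ makes $\rho \cap W$ a finite intersection of facets of $\rho$, hence a face; and this property is inherited by any further refinement, since the elements of $\rho^\vee$ cutting $\rho \cap W$ out of $\rho$ still lie in $(\rho')^\vee$ whenever $\rho' \subseteq \rho$. For the globalization, $Y$ quasicompact together with $f$ quasicompact of DM type makes $X$ noetherian, so only finitely many cones $\sigma$ of the Artin fan $\AF Y$ and finitely many subspaces $W$ (one per log stratum of $X$ lying over the stratum of $\sigma$) occur. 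For each such $\sigma$, which is the cone of an Artin cone $\AF\sigma \to \AF Y$ in the strict \'etale cover of $\AF Y$, I would apply Lemma~\ref{lem:subdivsofconesaregloballyrefined} to the strict map $\AF\sigma \to \AF Y$ and the subdivision of $\sigma$ produced above, obtaining a subdivision of $\AF Y$ whose pullback to $\AF\sigma$ refines it. Taking the common refinement of these finitely many subdivisions inside the cofiltered preorder $\Sub{\AF Y}$ gives $\tilde F \to \AF Y$, and I set $\tilde Y := Y \times^\ell_{\AF Y} \tilde F$. By construction $\tilde F$ induces on each relevant $\sigma$ a subdivision with the transversality property for every associated $W$, so by the first two steps the fs pullback $\tilde X = \tilde Y \times^\ell_Y X \to \tilde Y$ is strict at every geometric point, hence strict.

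I expect the main obstacle to be the first step: making precise the behavior of the fs pullback of a subdivision along the inclusion $\tau = \sigma \cap W \hookrightarrow \sigma$, which is neither strict nor the inclusion of a face, and in particular proving that the subfan condition on $\Sigma$ suffices for strictness of $\tilde f$. This requires a careful stratum-by-stratum comparison of the characteristic monoids of $\tilde X$ and $\tilde Y$, using that $\tau$'s lattice is the intersection of $\sigma$'s lattice with $W$. The combinatorial core is elementary toric geometry, and the globalization is routine given Lemma~\ref{lem:subdivsofconesaregloballyrefined} and the quasicompactness of $Y$ and $f$.
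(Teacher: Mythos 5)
Your route is genuinely different from the paper's. The paper's local engine is Ogus's flattening theorem: after a log blowup of $Y$ (glued from local ones by quasicompactness) the map $f$ becomes $\QQ$-integral, hence exact, and then exactness together with the epimorphism hypothesis and sharpness of the stalks forces $\overline{M}_{Y,f(\overline{x})} \to \overline{M}_{X,\overline{x}}$ to be an isomorphism --- no cone combinatorics at all; the globalization via Lemma~\ref{lem:subdivsofconesaregloballyrefined}/Lemma~\ref{lem:strictmapinitialpbsubdivisions} is essentially the same in both arguments. Your replacement of the local step by an explicit subdivision is viable \emph{when the stalk maps are surjective}: in that case the image of the dual cone map really is $\sigma \cap W$, the lattice of $\tau$ is $\Hom(\gp{\overline{M}_{Y,\overline{y}}},\ZZ)\cap W$, and your stratum-by-stratum comparison (subfan condition $\Rightarrow$ a $\delta$-stratum maps to the $\delta$-stratum, and the sharpened map $(\delta^\vee\cap \gp{\overline M_{Y,\overline y}})^\sharp \to (\delta^\vee\cap \gp{\overline M_{X,\overline x}})^\sharp$ is onto by surjectivity of groupifications and injective because $\delta\subseteq W$) does give strictness. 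This is the case actually used later in the paper, and it buys an explicit description of which blowups work, at the price of the chart/stratum bookkeeping you flag.

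However, the proposition as stated only assumes the stalk maps are \emph{epimorphisms} of monoids, and these need not be surjective, which breaks your first step. Concretely, let $\overline M_{Y,\overline y} = \langle (1,0),(1,1)\rangle \subseteq \overline M_{X,\overline x} = \NN^2$ (the stalk map at the origin of the toric map $\Aff^2\to\Aff^2$, $(x,y)\mapsto (x,xy)$). This inclusion of sharp fs monoids is an epimorphism --- if $g,h:\NN^2\to R$ agree on $(1,0)$ and $(1,1)$, then $g(1,0)+g(0,1)=h(1,0)+h(0,1)$ and cancellation in the integral monoid $R$ gives $g=h$ --- but it is not surjective, and it induces an isomorphism on groupifications, so your subspace $W$ is everything. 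Your transversality condition (``each $\ell_i$ has constant sign on every cone'') is then vacuous, satisfied already by the trivial subdivision, yet $f$ is not strict; so the condition ``$\rho\cap W$ is a face of $\rho$ for all $\rho\in\Sigma$'' does not imply strictness of $\tilde f$ under the stated hypothesis. The honest dual-side statement is that $\Hom(\overline M_{X,\overline x},\mathbb{R}_{\ge 0})$ embeds in $\sigma\cap W$ but may be strictly smaller; the fix is to require in addition that this image cone be a union of cones of $\Sigma$ (your computation then still goes through, since an epimorphism does give surjective groupifications and hence the lattice identification). Alternatively, adopt the paper's mechanism: first make $f$ exact via a log blowup \cite[Theorem III.2.6.7]{ogusloggeom}, after which ``epimorphism $+$ exact $+$ sharp'' forces the stalk maps to be isomorphisms with no toric computation. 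As written, your proof establishes the proposition only under the stronger hypothesis of surjective stalk maps.
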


\begin{proof}

\textbf{Case:} First suppose $X, Y$ are schemes. 

Locally on $Y$, one chooses a log blowup making $X \to Y$ $\QQ$-integral \cite[Theorem III.2.6.7]{ogusloggeom}. We can refine these local log blowups by a global one using quasicompactness of $Y$ and assume $f : X \to Y$ is $\QQ$-integral, in particular exact \cite[Remark pg. 58]{quasiunipotentriemhilbillusiekatonakayama}. The stalks of the characteristic monoids for a log blowup are epimorphisms, so the property of epimorphic stalks is preserved. 

Exactness entails a pullback
\[\begin{tikzcd}
\overline{M}_{Y, f(\overline{x})} \ar[r] \pb \ar[d]      &\overline{M}_{X, \overline{x}} \ar[d]     \\
\gp{\overline{M}_{Y, f(\overline{x})}}   \ar[r]    &\gp{\overline{M}_{X, \overline{x}}}
\end{tikzcd}\]
with horizontal arrows both epimorphisms. Epimorphisms of groups are surjections \cite{nlab:epimorphisms_of_groups_are_surjective}, so the top horizontal arrow is a surjection. Exactness ensures it is also an injection, hence an isomorphism.

We now reduce the general statement for log stacks to the above case. Cover $Y$ by a strict smooth map $V \to Y$ from a quasicompact scheme and choose a blowup $\tilde{V} \to V$ pulled back from $\AF{V}$ such that $\tilde{V} \times^{\msout{\ell}}_Y X \to \tilde{V}$ is strict. Find a log blowup $\tilde{Y} \to Y$ refining $\tilde{V} \to V$ using Lemma \ref{lem:strictmapinitialpbsubdivisions} to conclude.

\end{proof}

\begin{corollary}\label{cor:epistalkscofinallblpb}
If $f : X \to Y$ satisfies the hypotheses of Proposition \ref{prop:strlblepistalks}, the functor $f^\ast : \Sub{Y} \to \Sub{X}$ is initial. 
\end{corollary}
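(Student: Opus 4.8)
The plan is to deduce Corollary~\ref{cor:epistalkscofinallblpb} from Proposition~\ref{prop:strlblepistalks} and Lemma~\ref{lem:strictmapinitialpbsubdivisions} in essentially the same way that Lemma~\ref{lem:strictmapinitialpbsubdivisions} was proved, using the factorization-into-a-strict-map trick supplied by the proposition. First I would fix a log blowup $\tilde{X} \to X$ that we wish to refine by some pullback of a log blowup of $Y$; as usual I may assume $\tilde{X}$ is pulled back from a subdivision of the Artin fan $\AF{X}$, since these form an initial system in $\Sub{X}$ for quasicompact $X$. Apply Proposition~\ref{prop:strlblepistalks} to $f : X \to Y$: there is a log blowup $q : \tilde{Y} \to Y$ such that, forming the fs pullback square
\[\begin{tikzcd}
\tilde{X}_0 \ar[r, "\tilde{f}"] \ar[d, "p_0", swap] \lpb           &\tilde{Y} \ar[d, "q"]      \\
X \ar[r, "f", swap]       &Y,
\end{tikzcd}\]
the map $\tilde{f}$ is strict. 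Since $\tilde{Y} \to Y$ is a log blowup, $\tilde{Y}$ is itself quasicompact, and the pullback $\tilde{X}_0 \to X$ is a log blowup, so $\tilde{X}_0$ is quasicompact as well.

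Next I would run Lemma~\ref{lem:strictmapinitialpbsubdivisions} on the strict map $\tilde{f} : \tilde{X}_0 \to \tilde{Y}$: its functor $\tilde{f}^* : \Sub{\tilde{Y}} \to \Sub{\tilde{X}_0}$ is initial. The log blowup $\tilde{X} \to X$ restricts to a log blowup $\tilde{X} \times_X \tilde{X}_0 \to \tilde{X}_0$ (fs fiber products preserve log blowups), i.e.\ an object of $\Sub{\tilde{X}_0}$; by initiality of $\tilde{f}^*$ there is a subdivision $\tilde{Y}_1 \to \tilde{Y}$ whose pullback to $\tilde{X}_0$ refines $\tilde{X} \times_X \tilde{X}_0$, hence also refines $\tilde{X}$ after further composing with $\tilde{X}_0 \to X$. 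The composite $\tilde{Y}_1 \to \tilde{Y} \to Y$ is a log blowup of $Y$ (all $X$-morphisms of log blowups are log blowups, and the composite of a log blowup of $\tilde Y$ with the log blowup $\tilde Y\to Y$ is a log blowup of $Y$), and its fs pullback $f^*(\tilde{Y}_1) = \tilde{Y}_1 \times^\ell_Y X \to X$ agrees with the pullback of $\tilde{Y}_1$ along $\tilde{X}_0 \to X$ because fs fiber products compose; hence $f^*(\tilde{Y}_1)$ refines $\tilde{X}$. This exhibits, for every object $\tilde{X} \to X$ of $\Sub{X}$, an object of $\Sub{Y}$ mapping through it, which is the non-emptiness half of initiality of $f^*$.

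For connectedness of the comma categories $(f^*/\tilde X)$, I would argue as in the proof of Lemma~\ref{lem:strictmapinitialpbsubdivisions}, using that $\Sub{Y}$ and $\Sub{X}$ are cofiltered preorders: any two objects of $(f^*/\tilde X)$ are dominated by a common refinement in $\Sub{Y}$ (take the fs fiber product of the two log blowups of $Y$), which still maps through $\tilde X$, and in a preorder this common upper bound connects them. The main obstacle I anticipate is bookkeeping the compatibility of the two pullback operations — pulling back subdivisions of $\tilde Y$ along $\tilde f$ versus pulling back log blowups of $Y$ along $f$ — and making sure the reduction ``assume $\tilde X$ comes from $\AF X$'' is legitimate here; but both points are already handled by the quasicompactness hypotheses and by the remark, invoked in the proof of Lemma~\ref{lem:strictmapinitialpbsubdivisions}, that subdivisions pulled back from $\AF X$ form an initial system. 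With that in hand the proof is a formal diagram chase stringing together Proposition~\ref{prop:strlblepistalks} and Lemma~\ref{lem:strictmapinitialpbsubdivisions}.
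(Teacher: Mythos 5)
Your argument is correct and is essentially the paper's proof unpacked at the level of objects: the paper simply observes that the composite $p_!\,\tilde f^{*}\,q^{*} = p_!\,p^{*}f^{*} : \Sub{Y} \to \Sub{X}$ is initial, being built from the log blowups $q$, $p$ and the strict map $\tilde f$ of Lemma~\ref{lem:strictmapinitialpbsubdivisions}, and concludes, which is exactly your refine-pull-back-and-compose chase. (Your opening reduction to blowups pulled back from $\AF{X}$ is harmless but unnecessary, since you immediately pull the given blowup back to $\tilde X_0$ anyway.)
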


\begin{proof}

Use notation as in Proposition \ref{prop:strlblepistalks}. The composite $p_! \tilde{f}^\ast q^\ast = p_! p^\ast f^\ast : \Sub{Y} \to \Sub{X}$ is initial as a composite of functors coming from log blowups and the strict $\tilde{f}$ as in Lemma \ref{lem:strictmapinitialpbsubdivisions}. Hence $f^\ast$ is initial.

\end{proof}

\begin{definition}\label{def:epistalksgysinmap}
If $f : X \to Y$ satisfies the hypotheses of Proposition \ref{prop:strlblepistalks} and furthermore is equipped with a log perfect obstruction theory $\Cl{f} \subseteq E$, we can define $\gys{f}$ on $\HHl$-theory as the composite:
\[\HHl(Y) := \lim_{\tilde{Y} \in \Sub{Y}}\HH(\tilde{Y}) \overset{\gys{\tilde{f}}}{\to} \lim_{\tilde{Y} \in \Sub{Y}} \HH(\tilde{Y} \times^\ell_Y X) \simeq \lim_{\tilde{X} \in \Sub{X}} \HH(\tilde X) =: \HHl(X).\]
The first map is induced from the levelwise Gysin maps on $\tilde{f} : \tilde{Y} \times^\ell_X Y \to \tilde{Y}$, while the isomorphism comes from Corollary \ref{cor:epistalkscofinallblpb} showing $f^\ast : \Sub{Y} \to \Sub{X}$ is initial. 
\end{definition}

\begin{proposition}\label{prop:pfwdgysinmappreservelogvfcsinhhllimitkthy}
Consider an fs pullback square
\[\begin{tikzcd}
X' \ar[r, "p"] \ar[d, "f'"] \lpb      &X \ar[d, "f"]      \\
Y' \ar[r, "q"]      &Y
\end{tikzcd}\]
of DM type maps between log algebraic stacks with $Y', Y$ log smooth. Endow $f$ with a log perfect obstruction theory $\Cl{f} \subseteq E$ and equip $f'$ with the induced log perfect obstruction theory $\Cl{f'} \subseteq \Cl{f}|_{X'} \subseteq E|_{X'}$. 

\begin{enumerate}
    \item If $q$ is proper birational and $X$ is quasicompact, then 
    \[p_\ast \lvir{X'/Y'} = \lvir{X/Y} \quad \in \HHl(X).\] 
    \item\label{item:epistalkspullbacklogvirs} Suppose $q : Y' \to Y$ satisfies the hypotheses of Proposition \ref{prop:strlblepistalks}. Endow $q$ with a log perfect obstruction theory $\Cl{q} \subseteq F$. Then 
    \[\gys{q} \lvir{X/Y} = \lvir{X'/Y'} \quad \in \HHl(X').\]
\end{enumerate}

\end{proposition}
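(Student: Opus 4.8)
The plan is to reduce each assertion to its levelwise counterpart already established in Section~\ref{s:2}, using the initiality results of this section to identify the relevant inverse limits. For part (1), recall that $\lvir{X/Y} \in \HHl(X)$ is by Definition~\ref{def:lvirwelldefpfwd} the compatible sequence $(\lvir{\tilde X/Y})_{\tilde X \in \Sub{X}}$, whose well-definedness is Proposition~\ref{prop:logblowuppfwdvfcs}. First I would fix a log blowup $\tilde Y \to Y$ and take the fs pullback $\tilde X := \tilde Y \times^\ell_Y X$, which is a log blowup of $X$; since $Y'$ is log smooth and $q$ proper birational, the base-changed square
\[\begin{tikzcd}
\tilde X' \ar[r, "\tilde p"] \ar[d] \lpb      &\tilde X \ar[d]      \\
\tilde Y' \ar[r, "\tilde q"]      &\tilde Y
\end{tikzcd}\]
(with $\tilde Y' := \tilde Y \times^\ell_Y Y'$, $\tilde X' := \tilde Y' \times^\ell_{Y'} X'$) still has log smooth base, $\tilde q$ proper birational (3-for-2 for birationality, with $\tilde Y' \to Y'$ and $\tilde Y \to Y$ log blowups), and $\tilde X$ quasicompact. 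Endowing $\tilde f, \tilde f'$ with the induced log perfect obstruction theories, Theorem~\ref{thm:costellopfwdkthy} gives $\tilde p_* \lvir{\tilde X'/\tilde Y'} = \lvir{\tilde X/\tilde Y} = \lvir{\tilde X/Y}$ in $\HH(\tilde X)$. The remaining point is compatibility with the transition maps defining $p_* : \HHl(X') \to \HHl(X)$: by the Remark following the definition of $\HHl$, $p_*$ is the composite of the natural map $\lim_{\Sub{X'}} \to \lim_{\Sub{X}} \HH(\tilde X \times^\ell_X X')$ with levelwise pushforward, and the index category on the right is cofinally indexed by $\Sub{Y}$ via the base change above — so running over all $\tilde Y$ assembles the levelwise identities into the claimed equality in $\HHl(X)$.

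For part (2), I would argue symmetrically using $\gys{q}$ on $\HHl$-theory, which is available precisely because $q$ satisfies the hypotheses of Proposition~\ref{prop:strlblepistalks}: Corollary~\ref{cor:epistalkscofinallblpb} shows $q^* : \Sub{Y} \to \Sub{X'}$ is initial (note $X' \to Y'$, being an fs pullback of the DM-type $f$, inherits the epimorphic-stalks property from $q$ through the pullback square, so the relevant initiality is also available on the $X$-side as needed), and Definition~\ref{def:epistalksgysinmap} realizes $\gys{q}$ as $\gys{\tilde q}$ levelwise followed by the initiality isomorphism. Then for each $\tilde Y \in \Sub{Y}$, base-changing the square as above and applying the levelwise pullback compatibility of log virtual classes — this is the $\HH$-theoretic Gysin-pullback identity $\gys{\tilde q}\lvir{\tilde X/\tilde Y} = \lvir{\tilde X'/\tilde Y'}$, which follows from commutativity of Gysin maps with the strict pullbacks to the log-étale models $\tilde W \to \tilde Y$ (Construction~\ref{const:artinfanbiratl}) together with \cite[Proposition 2.5]{fengquktheory} exactly as in Remark~\ref{rmk:gysinmapssquarecommute} — gives the equality at level $\tilde Y$. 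Assembling over $\Sub{Y}$ and transporting along the initiality isomorphisms $\lim_{\Sub{Y}}\HH(\tilde Y \times^\ell_Y X') \simeq \HHl(X')$ yields the result.

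I expect the main obstacle to be the bookkeeping of index categories: one must check that the base-change operation $\tilde Y \mapsto \tilde Y \times^\ell_Y X$ (resp. $\times^\ell_Y X'$) is compatible with the transition morphisms on all sides and that the resulting diagram of inverse systems commutes, so that a compatible family of levelwise identities actually defines an element of (and an identity in) the limit. This is where initiality of $f^*$ and $q^*$ (Lemma~\ref{lem:strictmapinitialpbsubdivisions}, Corollary~\ref{cor:epistalkscofinallblpb}) does the real work: it guarantees that the inverse limits over $\Sub{X}, \Sub{X'}$ are computed cofinally by blowups pulled back from $\Sub{Y}$, collapsing the two-variable indexing to a single one. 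A secondary subtlety is verifying that the induced log perfect obstruction theories on the various base changes $\tilde f, \tilde f'$ genuinely agree with the ones obtained by restricting $\Cl{f} \subseteq E$, so that Theorem~\ref{thm:costellopfwdkthy} and the levelwise Gysin identity apply on the nose; this is routine from the compatibility of log normal cones with strict and log-étale base change (Remark~\ref{rmk:gysinmapdefnbasicprops}) but should be stated explicitly.
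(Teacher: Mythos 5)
There is a genuine gap, and it is the same one in both parts: you verify the levelwise identities only at log blowups of $X$ (resp.\ $X'$) that are pulled back from $\Sub{Y}$, and then assert that ``the index category is cofinally indexed by $\Sub{Y}$ via base change.'' That cofinality claim is exactly what is \emph{not} available here. Initiality of $f^* : \Sub{Y} \to \Sub{X}$ (or of the composite $\Sub{Y} \to \Sub{X'}$ used for your isomorphism $\lim_{\Sub{Y}}\HH(\tilde Y \times^\ell_Y X') \simeq \HHl(X')$) requires $f$ to be strict or at least to induce epimorphisms on characteristic stalks — that is the content of Lemma~\ref{lem:strictmapinitialpbsubdivisions}, Proposition~\ref{prop:strlblepistalks} and Corollary~\ref{cor:epistalkscofinallblpb} — and the proposition assumes this only for $q$, not for $f$. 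In the intended application $f$ is $\GS{V\times W} \to \Mprel$, which is neither strict nor has epimorphic stalk maps, so blowups of $X$ pulled back from $\Sub{Y}$ are genuinely not initial (e.g.\ if $Y$ has trivial log structure, $\Sub{Y}$ is trivial while $\Sub{X}$ is not). Since two elements of $\HHl(X) = \lim_{\tilde X \in \Sub{X}}\HH(\tilde X)$ agree only if they agree at \emph{every} $\tilde X$ (or on an initial subsystem), your check over $\Sub{Y}$ does not establish the equality; your incidental computation $\lvir{\tilde X/\tilde Y} = \lvir{\tilde X/Y}$ is fine but only lives on this insufficient subsystem.

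The fix — and the paper's route — is to index the levelwise verification by $\Sub{X}$ itself, which costs nothing because Theorem~\ref{thm:costellopfwdkthy} and Remark~\ref{rmk:gysinmapssquarecommute} do not require the top row to come from a blowup of $Y$: for each $\tilde X \in \Sub{X}$ one has $\tilde X \times^\ell_X X' \simeq \tilde X \times^\ell_Y Y'$, the composite $\tilde X \to Y$ is DM type, quasicompact, and carries the induced log perfect obstruction theory, so Costello's theorem applied to the square over $q : Y' \to Y$ gives the level-$\tilde X$ identity for (1), and Remark~\ref{rmk:gysinmapssquarecommute} gives it for (2). The only initiality actually needed is that of $p^* : \Sub{X} \to \Sub{X'}$, to invert the natural map $\HHl(X') \to \lim_{\tilde X \in \Sub{X}}\HH(\tilde X \times^\ell_X X')$ in part (2); this holds because $p$ (not $f'$, as your parenthetical has it) inherits the epimorphic-stalks property from $q$ under fs base change, so Corollary~\ref{cor:epistalkscofinallblpb} applies to $p$. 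With the indexing corrected in this way, the rest of your outline (Proposition~\ref{prop:logblowuppfwdvfcs} for well-definedness of the sequences, compatibility of the induced obstruction theories) goes through.
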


\begin{proof}

The map 
\[t : \HHl(X') \to \lim_{\tilde{X} \in \Sub{X}} \HH(\tilde{X} \times^\ell_X X')\]
sends the sequence $\lvir{X'/Y'} := (\lvir{\tilde{X}'/Y'})_{\tilde{X}' \in \Sub{X'}}$ to the sequence $(\lvir{\tilde{X} \times^\ell_X X'/Y'})_{\tilde{X} \in \Sub{X}}$. Costello's pushforward Theorem \ref{thm:costellopfwdkthy} shows the levelwise pushforward sends this sequence to the $(\lvir{\tilde{X}/Y})_{\tilde{X} \in \Sub{X}} =: \lvir{X/Y} \in \HHl(X)$. 

Similarly, one argues the levelwise Gysin maps send $(\lvir{\tilde{X}/Y})_{\tilde{X} \in \Sub{X}}$ to $(\lvir{\tilde{X} \times^\ell_X X'/Y'})_{\tilde{X} \in \Sub{X}}$ by applying Remark \ref{rmk:gysinmapssquarecommute} and then $t^{-1}$ sends this to $(\lvir{\tilde{X}'/Y'})_{\tilde{X}' \in \Sub{X'}} =: \lvir{X'/Y'}$.

\end{proof}

\begin{remark}\label{rmk:cptibilityofgysinmappfwdhhlandhhthy}
Write $s : \HHl(X) \to \HH(X)$, etc. for the natural projections. Under the assumptions of Proposition \ref{prop:pfwdgysinmappreservelogvfcsinhhllimitkthy},
\[s \gys{q} \lvir{X/Y} = \lvir{X'/Y'} = \gys{q} s \lvir{X/Y},\]
\[s p_\ast \lvir{X'/Y'} = \lvir{X/Y} = p_\ast s \lvir{X'/Y'}.\]
In this sense, the operations $\gys{q}, p_\ast$ defined variously on $\HHl$-theory or $\HH$-theory are compatible. 

\end{remark}


\section{The Log Product Formula}\label{section:logprodfmla}


Let $V, W$ be log smooth quasiprojective schemes throughout this section. Write $\Ms \subseteq \Mprel$ for the open substack of stable curves in the moduli space of all genus-$g$, $n$-marked nodal curves. Equip each with the divisorial log structure from the singular locus, an s.n.c. divisor. The stack $\GS{V}$ of \textit{log stable maps} sends an fs log scheme $T$ to diagrams of fs log schemes
\[\begin{tikzcd}
C \ar[r] \ar[d]     &   V      \\
T
\end{tikzcd},\]
where $C$ is an fs log smooth curve over $T$ with genus $g$ and $n$-markings such that the underlying diagram of schemes is a stable map of curves. We emphasize that $\GS{V}$ denotes the stack $\mathscr{M}(V)$ of Gross-Siebert \cite{loggw} parametrizing \textit{log} stable maps  and \textit{not} the ordinary space of stable maps. 

The stack $\frak D$ consists of diagrams $(C' \leftarrow C \rightarrow C'')$ of genus $g$, $n$-pointed prestable curves over $T$ such that $C \to C' \times C''$ is stable. Equivalently, no unstable rational component of $C$ is contracted under both projections.

We have an fs pullback square:

\begin{equation}\label{eqn:cartesiandiagramlogprodfmla}
\begin{tikzcd}
\GS{V \times W} \ar[r, "h"] \ar[d, "c"] \lpb         &Q \ar[r] \ar[d, "b"] \lpb          &\GS{V}\times \GS{W} \ar[d, "a"]        \\
\frak D \ar[r, "\nu"]         &Q' \ar[r, "\phi"] \ar[d] \lpb         &\Mprel \times \Mprel \ar[d, "s \times s"]       \\
        &\Ms \ar[r, "\Delta"]        &\Ms \times \Ms.
\end{tikzcd}
\end{equation}

One lemma and the study of this diagram in \cite{mythesislogprodfmla} suffice to achieve the log product formula.

\begin{lemma}\label{lem:logflatintegralisomnormalcones}
Provided $q$ is log flat and integral in the fs pullback square
\[\begin{tikzcd}
X' \ar[r] \ar[d, "f'"] \lpb      &X \ar[d, "f"]      \\
Y' \ar[r, "q"]      &Y,
\end{tikzcd}\]
the natural inclusion $\Cl{f'} \subseteq \Cl{f}|_{X'}$ is an isomorphism. 
\end{lemma}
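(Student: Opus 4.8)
The plan is to reduce the statement to a local, log-geometric computation on characteristic monoids, exactly as in the analogous statement for Chow groups, but being careful that everything is compatible with the $K$-theoretic log normal cone. First I would recall that both log normal cones $\Cl{f'}$ and $\Cl{f}$ are defined via Olsson's stacks as $\C{X'/\Log Y'}$ and $\C{X/\Log Y}$ respectively, and that the inclusion $\Cl{f'} \subseteq \Cl{f}|_{X'}$ arises functorially from the square of $\Log$-stacks. Since the claim is about an isomorphism of cone stacks over $X'$, it is smooth-local (indeed strict-smooth-local) on $X'$, hence on $Y'$ and $Y$; so I would first reduce to the case where $X, X', Y, Y'$ are affine schemes admitting smooth fs charts.

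The heart of the matter is then the behavior of the log normal cone under the log flat integral base change $q : Y' \to Y$. The key point: for $q$ log flat and integral, the map $\Log Y' \to \Log Y \times_Y Y'$ is flat (and the diagram of $\Log$-stacks $\Log Y' \to \Log Y$, $Y' \to Y$ is a pullback after taking underlying stacks, by integrality and the structure of Olsson's stacks — cf.\ the exactness/integrality discussion already used in Proposition~\ref{prop:strlblepistalks} and \cite[Theorem IV.4.3.5(1), Proposition III.2.1.5]{ogusloggeom}). With this identification, the fs pullback square giving $X'$ can be rewritten so that $X' = X \times_{\Log Y} \Log Y'$ over $\Log Y'$, and base change of (intrinsic) normal cones along a flat morphism — here the flat $\Log Y' \to \Log Y$ pulled back to $X'$ — gives $\C{X'/\Log Y'} \simeq \C{X/\Log Y} \times_X X' = \Cl{f}|_{X'}$. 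So the isomorphism is precisely the statement that the intrinsic normal cone is stable under flat base change on the (log) base, which is standard once the flatness of $\Log Y' \to \Log Y$ over $Y'$ is in hand.

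The step I expect to be the main obstacle is verifying cleanly that $\Log Y' \to \Log Y \times_Y Y'$ is flat — or more precisely, pinning down exactly which compatibility of Olsson's $\Log$ construction with log flat integral morphisms is needed and citing it correctly. Log flatness of $q$ in the sense of fs log schemes does not immediately give flatness of underlying schemes, and one must use integrality (Kato's notion) to get genuine flatness after the fs pullback; the interplay between ``log flat,'' ``integral,'' ``exact,'' and flatness of the induced morphisms of Artin stacks $\Log(-)$ is where the subtlety lies. I would handle this by passing to charts: an integral log flat morphism is étale-locally (on source and target) of the form $\Spec$ of an integral, flat homomorphism of monoid algebras base-changed by a flat ring map, at which point flatness of the relevant $\Log$-morphisms can be checked by hand on the chart, and the normal-cone base change follows from the scheme-theoretic flat base change for normal cones \cite{sta}.
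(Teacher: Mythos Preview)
Your core step---rewriting the fs pullback as $X' = X \times_{\Log Y} \Log Y'$ and invoking flat base change along $\Log Y' \to \Log Y$---does not go through, because the square
\[\begin{tikzcd}
X' \ar[r, "p"] \ar[d]      &X \ar[d]      \\
\Log Y' \ar[r]      &\Log Y
\end{tikzcd}\]
fails even to commute. The strict map $X \to \Log Y$ records the log structure $M_X$, so the composite across the top records $p^*M_X$ on $X'$; the composite through $\Log Y'$ records $M_{X'}$. These disagree whenever $p$ is not strict. Concretely, take $Y = \Aff_\NN$, $Y' = \Aff_{\NN^2}$ with $q$ induced by the face map $\NN \hookrightarrow \NN^2$ onto the first factor (log smooth and integral), and $f = \mathrm{id}_Y$. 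Then $X' = \Aff_{\NN^2}$ has $\overline{M}_{X'}$ of rank two at the origin while $p^*\overline{M}_X$ has rank one there, so the two composites $X' \rightrightarrows \Log Y$ land in different strata of $\Log Y$. There is thus no cartesian square of the shape you want on which to run flat base change of intrinsic normal cones, regardless of any flatness property of $\Log Y' \to \Log Y$.

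The paper circumvents this by interposing the factorization $X \to W \to Y$ (via a chart, as in Construction~\ref{const:artinfanbiratl}) with $X \to W$ \emph{strict} and $W \to Y$ log \'etale. Strictness forces the top square
\[\begin{tikzcd}
X' \ar[r] \ar[d]      &X \ar[d]      \\
W' \ar[r]      &W
\end{tikzcd}\]
to be an honest scheme-theoretic pullback; $W' \to W$ inherits log flatness and integrality from $q$ and is therefore flat in the ordinary sense, so classical flat base change gives $\Clstrict{X'/W'} \simeq \Clstrict{X/W}|_{X'}$. The log \'etale tail $W \to Y$ is absorbed via the short exact sequence of cone stacks with kernel $\Tl{W/Y}|_X$, where log flatness of $q$ identifies the kernels over $X'$. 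The insertion of a strict intermediate target $W$ is precisely the device your proposal is missing.
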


\begin{proof}

The claim is smooth-local in $\Cl{f}$, hence in $X, Y$ by Lemmas 2.15, 2.16 of \cite{mythesislogprodfmla}. Assume $X, Y$ are affine schemes equipped with a global chart by Artin Cones: 
\[\begin{tikzcd}
X \ar[r] \ar[d]       &\AF{\sigma} \ar[d]        \\
Y \ar[r]       &\AF{\tau}. 
\end{tikzcd}\]
Write $W := Y \times^{\msout{\ell}}_{\AF{\tau}} \AF{\sigma}$ as in Construction \ref{const:artinfanbiratl} and take the fs pullback of the factorization:
\[\begin{tikzcd}
X' \ar[r] \ar[d] \lpbstrict      &X \ar[d]      \\
W' \ar[r] \ar[d] \lpb      &W \ar[d]      \\
Y' \ar[r]      &Y.
\end{tikzcd}\]
Then $X \to W$ is strict, $W \to Y$ is log \'etale, and we have a map of short exact sequences of cone stacks
\[\begin{tikzcd}
\Tl{W'/Y'}|_{X'} \ar[r] \ar[d]      &\Clstrict{X'/W'} \ar[r] \ar[d] \pb       &\Cl{X'/Y'} \ar[d]       \\
\Tl{W/Y}|_{X} \ar[r]       &\Clstrict{X/W} \ar[r]         &\Cl{X/Y}.
\end{tikzcd}\]
Log flatness of $q$ ensures the leftmost arrow  becomes an isomorphism after pullback to $X'$ \cite[1.1 (iv)]{logcotangent}, so the right square of cones is cartesian. It suffices to show the map of ordinary normal cones $\Clstrict{X'/W'} \to \Clstrict{X/W}|_{X'}$ is an isomorphism. Note $W' \to W$ is log flat and integral, hence it is flat \cite[Theorem IV.4.3.5(1)]{ogusloggeom} and the result is standard.

\end{proof}

\begin{remark}\label{rmk:rmksonbigfspbdiagram}
The map $\frak D \to \Mprel$ sending a trio $(C' \leftarrow C \to C'')$ of partial stabilizations to their source $C$ is log \'etale \cite[Remark 5.9]{mythesislogprodfmla}. Not only does this equate the normal cones $\Cl{\GS{V \times W}/\frak D} \simeq \Cl{\GS{V \times W}/\Mprel}$, but \cite[Lemma 5.10]{mythesislogprodfmla} identifies the natural obstruction theory on $c$ with the one pulled back from $a$. We similarly equip $b$ with the pulled back obstruction theory from $a$. 

The stabilization map $\Mprel \to \Ms$ is integral and log smooth. To see this, use the smooth cover $\bigsqcup_k \Mspecific{g, n + k} \to \Mprel$ and recognize the composites $\Mspecific{g, n+k} \to \Ms$ as iterations of the universal curve $\Mspecific{g, m+1} \to \Mspecific{g, m}$. Give $\Delta$ the canonical log perfect obstruction theory from the isomorphism $\Cl{\Ms/\Ms \times \Ms} \simeq \Nl{\Ms/\Ms \times \Ms}$ as in \cite[Remark 3.7]{mythesislogprodfmla}. Since $s \times s$ is log flat and integral, Lemma \ref{lem:logflatintegralisomnormalcones} equates $\Cl{Q'/\Mprel \times \Mprel} \simeq \Cl{\Ms/\Ms \times \Ms}|_{Q'}$ and the corresponding Gysin maps $\gys{\phi} = \gys{\Delta}$ are equal. 

The log smooth stabilization map also means $Q'$ is log smooth, and we can write $\lvir{Q/Q'} = \lvir{Q}$ as permitted by Remark \ref{rmk:gysinmapdefnbasicprops}.  

\end{remark}

\begin{theorem}[``The log product formula'']\label{thm:logprodfmla}
The log product formula holds in $\HH$-theory as well as $\HHl$-theory: the classes
\[h_\ast \lvir{\GS{V \times W}} = \gys{\Delta} \lvir{\GS{V} \times \GS{W}}\]
are equal in $\HH(Q)$ as well as $\HHl(Q)$. 

\end{theorem}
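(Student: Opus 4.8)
The plan is to reduce the log product formula to the two ``pushforward'' identities established in the previous sections, namely Costello's pushforward formula in $\HH$-theory (Theorem~\ref{thm:costellopfwdkthy}, and its $\HHl$ refinement in Proposition~\ref{prop:pfwdgysinmappreservelogvfcsinhhllimitkthy}) for the left-hand side, and the compatibility of Gysin maps with fs pullback squares (Remark~\ref{rmk:gysinmapssquarecommute}) for the right-hand side. The whole argument is driven by the large fs pullback diagram~\eqref{eqn:cartesiandiagramlogprodfmla}; the inputs we need about that diagram are all recorded in Remark~\ref{rmk:rmksonbigfspbdiagram}: the map $\frak D \to \Mprel$ is log \'etale so it does not change the obstruction theory, the stabilization $s$ is integral and log smooth, and $s \times s$ is log flat and integral, so Lemma~\ref{lem:logflatintegralisomnormalcones} identifies $\Cl{Q'/\Mprel \times \Mprel}$ with $\Cl{\Ms/\Ms \times \Ms}|_{Q'}$ and hence $\gys{\phi} = \gys{\Delta}$.

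First I would treat the right-hand side. Start from the fs pullback square with top row $Q \to \GS{V} \times \GS{W}$ and vertical maps $b$ and $a$, where $a$ carries a log perfect obstruction theory (the product of the natural obstruction theories on $\GS{V}/\Mprel$ and $\GS{W}/\Mprel$) and $b$ is given the pulled-back obstruction theory as in Remark~\ref{rmk:rmksonbigfspbdiagram}. The base change compatibility of Gysin maps --- for $\HH$-theory this is the defining property invoked in Remark~\ref{rmk:gysinmapssquarecommute}, and for $\HHl$-theory it is part~\eqref{item:epistalkspullbacklogvirs} of Proposition~\ref{prop:pfwdgysinmappreservelogvfcsinhhllimitkthy} --- gives $\gys{b}\,\lvir{\GS{V} \times \GS{W}/\Mprel \times \Mprel} = \lvir{Q/Q'}$ with respect to $b$'s pulled-back theory. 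Composing further with $\gys{\phi} = \gys{\Delta}$, the lower square $Q' \to \Mprel \times \Mprel$ over $\Ms \to \Ms \times \Ms$ identifies $\gys{\Delta}$ applied to the product class with the log virtual class pulled back down the diagram; assembling, the right-hand side $\gys{\Delta}\,\lvir{\GS{V} \times \GS{W}/\Mprel \times \Mprel}$ equals the log virtual class of $\GS{V} \times \GS{W}$ pulled back via $a$ and then $\phi$ to $Q$, which by commutativity of Gysin maps (Remark~\ref{rmk:gysinmapssquarecommute}, resp.\ its $\HHl$ analogue) is the log virtual class of $Q$ relative to $\Ms$ computed through $Q'$.

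Next I would treat the left-hand side. The outer square of~\eqref{eqn:cartesiandiagramlogprodfmla} with top map $h : \GS{V \times W} \to Q$ and bottom map $\frak D \to Q'$ --- or better, after using $\frak D \to \Mprel$ log \'etale to replace $\frak D$, the square over $\Ms \to \Ms \times \Ms$ --- is exactly the input to Costello's pushforward theorem: $Q'$ and $\Mprel \times \Mprel$ (equivalently $\Ms$ and $\Ms \times \Ms$ at the bottom) are log smooth, the relevant birational proper map is $\nu$ (or the composite down to $\Ms$), and $c$ carries the obstruction theory pulled back from $a$, which is the induced one. Theorem~\ref{thm:costellopfwdkthy} then yields $h_*\,\lvir{\GS{V \times W}/\Mprel} = \lvir{Q/Q'}$ in $\HH(Q)$ (relative to the obstruction theory on $Q \to Q'$, i.e.\ $b$'s theory), and Proposition~\ref{prop:pfwdgysinmappreservelogvfcsinhhllimitkthy}(1) gives the same in $\HHl(Q)$. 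One must check here that the obstruction theory on $\GS{V \times W}/\frak D$ is the one pulled back along $c$ from the obstruction theory on $Q/Q'$, which is precisely \cite[Lemma 5.10]{mythesislogprodfmla} as recalled in Remark~\ref{rmk:rmksonbigfspbdiagram}, together with the identification $\Cl{\GS{V \times W}/\frak D} \simeq \Cl{\GS{V \times W}/\Mprel}$.

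Finally I would match the two sides: both have now been rewritten as the single class $\lvir{Q/Q'}$ (equivalently, the log virtual class of $Q$ relative to $\Ms$, computed through the obstruction theory $b$ pulled back from $a$ together with $\gys{\phi}=\gys{\Delta}$), so they are equal in $\HH(Q)$ and in $\HHl(Q)$. The main obstacle is bookkeeping rather than a new idea: one must verify that the obstruction theories appearing on $c$, $b$, $h$, $\nu$, $\phi$ are mutually compatible under the various pullbacks --- i.e.\ that ``the induced obstruction theory'' means the same thing whether one descends it along $c$ from $a$, or along $h$ and then reads it off $b$, or picks it up from $\phi\simeq\Delta$ --- so that the outputs of Costello's theorem and of Gysin-map commutativity are literally the same class and not merely classes for formally different obstruction theories. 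Checking this amounts to carefully chaining the cone-stack short exact sequences of \cite[Proposition 2.5]{mythesislogprodfmla} across the diagram, exactly as in Herr's Chow-theoretic proof \cite{mythesislogprodfmla}, now with $\HH$- and $\HHl$-theory in place of Chow groups.
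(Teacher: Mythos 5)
Your proposal is essentially the paper's own proof: both sides are shown to compute $\lvir{Q/Q'}$, the left-hand side via Costello's pushforward (Theorem~\ref{thm:costellopfwdkthy}, refined to $\HHl$ in Proposition~\ref{prop:pfwdgysinmappreservelogvfcsinhhllimitkthy}(1)) applied to the square with bottom map $\nu \colon \frak D \to Q'$, and the right-hand side via Gysin compatibility in fs pullback squares (Remark~\ref{rmk:gysinmapssquarecommute}, resp.\ Proposition~\ref{prop:pfwdgysinmappreservelogvfcsinhhllimitkthy}(\ref{item:epistalkspullbacklogvirs})) together with the identifications $\gys{\phi} = \gys{\Delta}$, $\lvir{\GS{V\times W}/\frak D} = \lvir{\GS{V\times W}/\Mprel}$ and the obstruction-theory matching of Remark~\ref{rmk:rmksonbigfspbdiagram}. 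Two minor touch-ups: in the top-right square the relevant Gysin map is $\gys{\phi}$ (the bottom horizontal map), not $\gys{b}$, and for the $\HHl$ statement one must also record, as the paper does, that $\phi$ induces surjective maps on characteristic-monoid stalks so that $\gys{\phi}$ is defined on $\HHl$ (Definition~\ref{def:epistalksgysinmap}); the paper then obtains the $\HH$ statement from the $\HHl$ one via Remark~\ref{rmk:cptibilityofgysinmappfwdhhlandhhthy} rather than arguing the two theories in parallel, but this is an inessential difference.
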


\begin{proof}

We claim both sides of the equality compute $\lvir{Q/Q'}$ as in \cite{prodfmla}. Remark \ref{rmk:cptibilityofgysinmappfwdhhlandhhthy} shows it suffices to prove the equality in $\HHl(Q)$. 

Note that $\phi$ induces surjective maps
\[\overline{M}_{\Mprel \times \Mprel, \phi(\overline{x})} \to \overline{M}_{Q', \overline{x}}\]
at geometric points $\overline{x} \to Q'$, so $\gys{q'}$ is well-defined on $\HHl$-theory as in Definition \ref{def:epistalksgysinmap}. Proposition \ref{prop:pfwdgysinmappreservelogvfcsinhhllimitkthy} refines Costello's Formula \ref{thm:costellopfwdkthy} and the compatibility we'll need of Remark \ref{rmk:gysinmapssquarecommute} to $\HHl$-theory. In particular, $\gys{\phi}$ and $\nu_\ast$ both preserve log virtual fundamental classes:
\[\gys{\phi} \lvir{\GS{V} \times \GS{W}} = \lvir{Q/Q'}\]
\[\nu_\ast \lvir{\GS{V \times W}} = \lvir{Q/Q'}.\]
Combining these with the equalities $\lvir{\GS{V \times W}} = \lvir{\GS{V \times W}}$ and $\gys{\phi} = \gys{\Delta}$ from Remark \ref{rmk:rmksonbigfspbdiagram} gives the result.

\end{proof}

\begin{remark}

One may also prove Theorem \ref{thm:logprodfmla} as in \cite{dhruvlogprodfmla}. One replaces $\Mprel \times \Mprel$ by $\Mprel(\AF{V}) \times \Mprel(\AF{W})$, makes similar adjustments to $\frak D, Q'$, and takes judicious log blowups to make the analogue of $s \times s$ flat. Using $\Mprel(\AF{V})$, etc. instead of $\Mprel$ makes the maps $a, c$ strict, so their log virtual fundamental classes agree with the ordinary schematic ones. One applies the usual versions of Costello's formula and compatibility of Gysin maps in squares instead of our log-adapted versions.

\end{remark}


\section{Relative variants and a Counterexample}

We offer a relative variant of Theorem \ref{thm:logprodfmla} as in \cite[\S 2.4]{leequlogprodfmla}. This variant makes it easier to see the necessity of $\gys{\Delta}$ instead of $\Delta^!$ in Theorem \ref{thm:logprodfmla}. We end with a counterexample to the version of the relative variant where $\Delta^!$ replaces $\gys{\Delta}$, justifying our technology. 

Let $V \to S$, $W \to T$ be log smooth, quasiprojective maps. We allow $V, W, S, T$ to be algebraic stacks as long as $V \to S$, $W \to T$ are representable by schemes; the ensuing fibers of $\GS{V/S} \to S$ are stable maps to schemes. Our counterexample uses relative stable maps to $\stquot{\PP^1/\GG_m} \to B\GG_m$, like ``rubber'' maps but without expansions. 

Form the analogue of Diagram \eqref{eqn:cartesiandiagramlogprodfmla}:
\begin{equation}\label{eqn:relativecartesiandiagramlogprodfmla}
\begin{tikzcd}
\GS{V \times W/ S \times T} \ar[r, "h"] \ar[d, "c"] \lpb         &Q \ar[r] \ar[d, "b"] \lpb          &\GS{V/S}\times \GS{W/T} \ar[d, "a"]        \\
\frak D \ar[r, "\nu"]         &Q' \ar[r, "\phi"] \ar[d] \lpb         &\Mprel \times \Mprel \ar[d, "s \times s"]       \\
        &\Ms \ar[r, "\Delta"]        &\Ms \times \Ms.
\end{tikzcd}
\end{equation}

\begin{remark}\label{rmk:conncomponentsdiscdatacontactorder}
Log stable maps $C \to V$ have more discrete data than simply genus $g$ and marked points $n$. They also have contact orders at the marked points and the curve class $\beta$ of their image. 

Let $\Gamma$ contain the genus $g$, the number $n$ of marked points, and the contact orders of a stable map $C \to V \times W$ over $S \times T$. This induces discrete data $\Gamma', \Gamma''$ describing the maps $C' \to V$, $C'' \to W$ obtained by stabilization. 

One way to bundle this data together is in the Artin fan $\AF{\GS{V/S}}$. The map from a log stack $X$ to its Artin fan is geometrically connected by construction, but not necessarily surjective if $X$ is not log smooth. This induces an injection from the connected components of log stable maps to those of its Artin fan, which is more closely related to tropical curves. 

Given $\Gamma', \Gamma''$ discrete data for maps to $V$, $W$, there is at most one $\Gamma$ for maps to $V \times W$ that induces $\Gamma'$ and $\Gamma''$. If $g, n$ differ in $\Gamma'$, $\Gamma''$, there are none.

\end{remark}

\begin{theorem}[=\ref{thm:logprodfmla}$'$]
Fix discrete data $\Gamma$ for a stable map $C \to V \times W$ over $S \times T$ as in Remark \ref{rmk:conncomponentsdiscdatacontactorder}. The log product formula holds in $\HHl$-theory and $\HH$-theory:
\[h_\ast \lvir{\GSG{V \times W/S \times T}{\Gamma}} = \gys{\Delta} \lvir{\GSG{V/S}{\Gamma'} \times \GSG{W/T}{\Gamma''}}\]
in $\HHl(Q)$ and $\HH(Q)$.
\end{theorem}

\begin{proof}

Omitted. For the discrete data, the normal cones themselves $\Cl{X/Y}$ respect disjoint unions in the source.

\end{proof}

\subsection{Counterexample to the ordinary log product formula}\label{s:ddrcounterex}

We claim the ``ordinary product formula'' with $\Delta^!$ in the place of $\gys{\Delta}$ is false:
\[h'_\ast \lvir{\GS{V \times W}} \neq \Delta^! \lvir{\GS{V} \times \GS{W}},\]
where $h'$ is the map
\[\GS{V \times W} \overset{h}{\to} Q \overset{\pi}\to \GS{V} \times_{\Ms}^{sch} \GS{W}\]
to the ordinary scheme-theoretic fiber product. This necessitates the introduction of $\gys{\Delta}$ in Section \ref{s:1}.

\begin{figure} 
    \centering
    \begin{tikzpicture}
    \draw[-] (0, 0) to (0, 3);
    \draw[-] (0, 0) to (4, 0);
    \draw[-, very thick] (4, 0) ..controls (5.5, 5) and (5.5, -1).. (4, 3);
    \draw[-] (0, 3) to (4, 3);
    \draw[-, blue, bend left=3] (0, 1.9) to (4.6, 1.7);
    \draw[-, blue, bend right=3] (0, 1.5) to (4.6, 1.7);
    \draw[-, blue] (0, 2.5) to (4.2, 2.5);
    \node[above, blue] at (2, 3){$\DR$};
    \draw[-, very thick] (0, .7) to (4.2, .7);
    \fill (4.64, 1.7) circle (2pt);
    \fill (4.2, .7) circle (2pt);
    \node[below] at (2, 0){$\Mone$};
    \begin{scope}[shift={(-8, 0)}]
    \draw[-] (0, 0) to (0, 3);
    \draw[-] (0, 0) to (4, 0);
    \draw[-, very thick] (4, 0) ..controls (4.7, 1.7).. (4, 3);
    \draw[-, very thick] (4.7, 0.5) ..controls (4, 1.7).. (4.7, 2.8);
    \draw[-] (0, 3) to (4, 3);
    \node[right] at (4.7, 2.8){$E$};
    \draw[-, blue, bend left=3] (0, 1.9) to (4.2, 1.7);
    \draw[-, blue, bend right=3] (0, 1.5) to (4.2, 1.7);
    \draw[-, blue] (0, 2.5) to (4.2, 2.5);
    \node[above, blue] at (2, 3){$\DR^{\dagger}$};
    \draw[-, very thick] (0, .7) to (4.2, .7);
    \fill (4.28, .7) circle (2pt);
    \node[below] at (2, 0){$\Mone^\dagger$};
    \draw[->] (5.3, 1.5) to (7.2, 1.5);
    \end{scope}
    \end{tikzpicture}
    \caption{The map $\Mone^\dagger \to \Mone$ blowing up the self-intersection point of one of the n.c. divisors. The log structure of each comes from the bolded n.c. divisors. }
    \label{fig:doubleramicycle}
\end{figure}
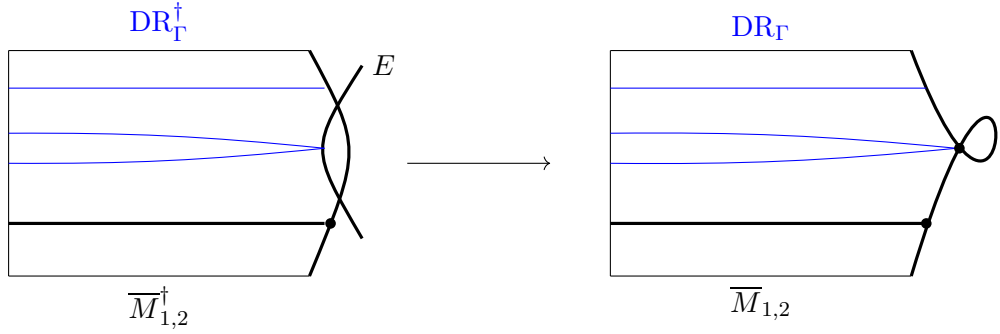

We outline a counterexample due to Dhruv Ranganathan \cite{dhruvlecturenotessubdivisions} and David Holmes, which gives the following:
\[
\begin{split}
&\Delta^! \lvir{\Mone^{\dagger}(\PP^1) \times \Mone^{\dagger}(\PP^1)}
\\
&\hspace{50pt}\neq \pi_\ast \gys{\Delta} \lvir{\Mone^{\dagger}(\PP^1) \times \Mone^{\dagger}(\PP^1)}.
\end{split}
\]
For notational simplicity, write $\DR$ for $\GSG{[\PP^1/\CC^\ast] / B\CC^\ast}{\Gamma}$.
Here we equip $\PP^1$ with divisorial log structure from 0 and $\infty$ and the moduli space consists of relative stable maps 
\[
\begin{tikzcd}
C \ar[r, "f"] \ar[d]       &{[ \PP^1 / \CC^\ast ]} \ar[d]       \\
S \ar[r]       &B\CC^\ast
\end{tikzcd}
\]    
of degree 2 from genus-one, 2-marked curves with contact order 2 at both 0 and $\infty$.

If no confusion may arise, we denote also $\DR$ as its image on $\Mone$, which has 
$\mathbb{Z}/2\mathbb{Z}$-action at all points as their stack structures. $\DR$ can be understood as a degree-three multisection from $\overline{M}_{1,1}$ to $\Mone$ as in Figure \ref{fig:doubleramicycle}. We forbid the two marked points from coming together in $\DR$, a component that is sometimes included in double ramification cycles.

\subsubsection*{\textbf{Counterexample in Chow}}

We first give the computation in Chow theory.
By definition, we have
\[
\Delta^! (\DR \times \DR) = \DR \cdot \DR.
\]

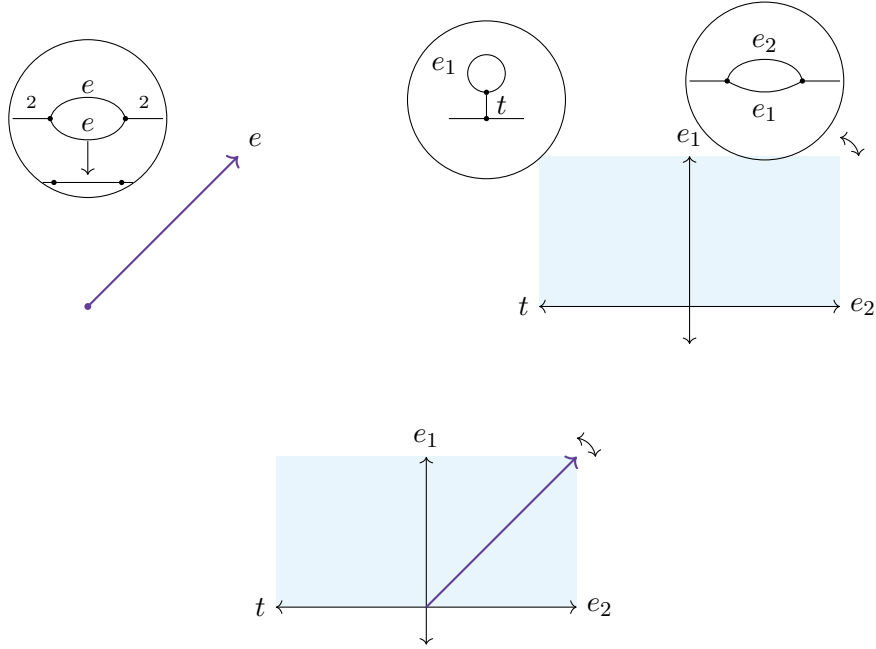
\begin{figure} 
    \centering
    \begin{tikzpicture}
    \begin{scope}[shift={(8, 0)}]
        \fill[Cerulean!8] (-2, 0) rectangle (2, 2);
        \draw[->] (0, 0) -- (2, 0);
        \draw[->] (0, 0) -- (0, 2);
        \draw[->] (0, 0) -- (-2, 0);
        \draw[->] (0, 0) -- (0, -.5);
        \node[right] at (2, 0){$e_2$};
        \node[above] at (0, 2){$e_1$};
        \node[left] at (-2, 0){$t$};
        \draw[<->, bend right=35] (2.25, 2) to (2, 2.25);
    \end{scope}
    \begin{scope}[shift={(5.3, 2.5)}, scale=.5]
        \draw[-] (-1, 0) to (1, 0);
        \draw[-] (0, 0) to (0, .7);
        \node[right] at (0, .35){$t$};
        \node[left] at (-.5, 1.4){$e_1$};
        \draw (0, 1.2) circle (.5);
        \fill (0, 0) circle (2pt);
        \fill (0, .7) circle (2pt);
        \draw (0, .5) circle (2.1);
    \end{scope}
    \begin{scope}[shift={(9, 3)}, scale=.5]
        \draw[-] (1, 0) to (2, 0);
        \draw[-] (-1, 0) to (-2, 0);
        \draw[-, bend left=80] (-1, 0) to (1, 0);
        \draw[-, bend right] (-1, 0) to (1, 0);
        \fill (1, 0) circle (2pt);
        \fill (-1, 0) circle (2pt);
        \node[below] at (0, -.35){$e_1$};
        \node[] at (0, 1){$e_2$};
        \draw (0, 0) circle (2.1);
    \end{scope}
    \begin{scope}[shift={(0, 2.5)}, scale=.5]
        \draw[-] (1, 0) to (2, 0);
        \draw[-] (-1, 0) to (-2, 0);
        \draw[-, bend left=75] (-1, 0) to (1, 0);
        \draw[-, bend right=75] (-1, 0) to (1, 0);
        \fill (1, 0) circle (2pt);
        \fill (-1, 0) circle (2pt);
        \draw (-1.2, -1.7) to (1.2, -1.7);
        \fill (-.9, -1.7) circle (2pt);
        \fill (.9, -1.7) circle (2pt);
        \node[above] at (0, -.6){$e$};
        \node[above] at (0, .4){$e$};
        \node[above] at (1.5, 0){\tiny $2$};
        \node[above] at (-1.5, 0){\tiny $2$};
        \draw (0, 0) circle (2.1);
        \draw[->] (0, -.6) to (0, -1.5);
    \end{scope}
    \draw[->, RoyalPurple, thick] (0, 0) to (2, 2);
    \fill[RoyalPurple] (0, 0) circle (1.3pt);
    \node[above right] at (2, 2){$e$};
    \begin{scope}[shift={(4.5, -4)}]
    \fill[Cerulean!8] (-2, 0) rectangle (2, 2);
        \draw[->] (0, 0) -- (2, 0);
        \draw[->] (0, 0) -- (0, 2);
        \draw[->] (0, 0) -- (-2, 0);
        \draw[->] (0, 0) -- (0, -.5);
        \draw[->, RoyalPurple, thick] (0, 0) to (2, 2);
        \node[right] at (2, 0){$e_2$};
        \node[above] at (0, 2){$e_1$};
        \node[left] at (-2, 0){$t$};
        \draw[<->, bend right=35] (2.25, 2) to (2, 2.25);
    \end{scope}
    \end{tikzpicture}
    \caption{Above left: $\trop{\DR}$. Above right: $\trop{\Mone}$. Below: The subdivision $\Mone^{\dagger,trop}$. Typical members of top-dimensional cones are encircled. Quotient by the $\ZZ/2$ action interchanging $e_1$ and $e_2$. } 
    \label{fig:tropdoubramcycle}
\end{figure}

To compute $\Delta^{\dagger}(\DR \times \DR)$ explicitly, we introduce the following procedure using tropical geometry.

Let $\DR^{trop}$ and $\Mone^{trop}$ be the Artin fans of $\DR$ and $\Mone$ respectively. Think of them as stacky cone complexes with strict maps $\DR \rightarrow \DR^{trop}$ and $\Mone \rightarrow \Mone^{trop}$.
Both $\trop{\DR}$ and $\trop{\Mone}$ have $\ZZ/2$ stack structure from interchanging the edges $e_1$ and $e_2$. The map $\trop{\DR} \to \trop{\Mone}$ is the inclusion of the diagonal in the quadrant spanned by $e_1$ and $e_2$. 
It does not map cones surjectively onto cones, which corresponds to a non-flat map of toric stacks. Take a subdivision $\Mone^{\dagger, trop}$ of $\trop{\Mone}$ by adding the diagonal to the $e_1$-$e_2$ quadrant as in Figure \ref{fig:tropdoubramcycle}. 
This subdivision induces a log blowup 
\[\pi : \Mone^\dagger := \Mone \times_{\trop{\Mone}} \Mone^{\dagger,trop} \to \Mone\] 
such that the factorization $\DR^{\dagger} \to \Mone^{\dagger}$ is strict, where $\DR^{\dagger}$ is the strict transformation of $\DR$. Let $E = \pi^{-1}(p)$ be the exceptional divisor with $\mathbb{Z}/2\mathbb{Z}$-action at all points as stack structure.


Now we have
\[
\Delta^{\dagger} (\DR \times \DR) = \DR^{\dagger} \cdot \DR^{\dagger}
\]
and hence
\[
\begin{split}
\pi_\ast \Delta^{\dagger} (\DR\times \DR) &= \pi_\ast (\DR^{\dagger} \cdot \DR^{\dagger})
\\
&= \pi_\ast \Big((\pi^\ast\DR - E)\cdot(\pi^\ast\DR - E)\Big)
\\
&= \DR \cdot \DR - 
\frac{1}{4}\neq \Delta^! (\DR \times \DR).
\end{split}
\]
Note that $\pi^\ast\DR \cdot E = 0$ by projection formula and $E\cdot E = -\frac{1}{4}$. This gives the inequality for Chow theory.

\subsubsection*{\textbf{Counterexample in $\HH$-theory}}

For $K$-theory, note first that 
\[
\begin{split}
\Delta^![\OO_{\DR} \times \OO_{\DR}] &:= \phi \Big( 
\left[ \OO_{\Delta(\Mone)}\otimes^L \left( \OO_{\DR}\times \OO_{\DR} \right) \right] \Big)
\\
& = \lambda_{-1} (N^\ast_{\DR / \Mone})\cdot \OO_{\DR} ,
\end{split}
\]
where $\lambda_{-1}(F) := \sum_i (-1)^i \Lambda^i F$ is the alternating sum of exterior power of $F$.
We view both $\OO_{\Delta(\Mone)}$ and $\OO_{\DR}\times \OO_{\DR}$ as sheaves on
$\Mone \times \Mone$ and $\phi: K_{\circ}^{\Delta(\Mone)} (\Mone\times \Mone) \simeq K_{\circ}(\Mone)$ is a canonical isomorphism, where $K^Z_{\circ} (X)$ denote the $K$-theory of $X$ supported on $Z$. 
The last equality is the excess intersection formula in $K$-theory \cite[VI.\ Theorem~1.3]{FultonLang}. One can also compute the derived tensor product $\otimes^L$ by taking locally free resolution of $\OO_{\Delta(\Mone)}$ and $\OO_{\DR}$.

The computation of $\Delta^{\dagger}[\OO_{\DR} \times \OO_{\DR}]$ is similar.
\[
\Delta^{\dagger}[\OO_{\DR} \times \OO_{\DR}] := \phi \Big( 
\left[\OO_{\Delta(\Mone^{\dagger})}\otimes^L ( \OO_{\DR^{\dagger}}\times \OO_{\DR^{\dagger}} ) \right] \Big),
\]
Therefore, assuming the third equality below, we have
\begin{equation}\label{e:3}
\begin{split}
& \pi_\ast \Delta^{\dagger} [\OO_{\DR} \times \OO_{\DR}]
\\
= \, &\pi_\ast \phi \Big( 
\left[ \OO_{\Delta(\Mone^{\dagger})} \otimes^L  \left(\OO_{\DR^{\dagger}} \times \OO_{\DR^{\dagger}} \right) \right] \Big) \\
 = \, & \pi_\ast \Big(
\lambda_{-1}(N^\ast_{\DR^{\dagger}/\Mone^{\dagger}}) \cdot \OO_{\DR^{\dagger}}
\Big)
\\
= &- 2\OO_p + \lambda_{-1} (N^\ast_{\DR / \Mone})\cdot \OO_{\DR} \\
= &- 2\OO_p + \Delta^! (\OO_{\DR} \times \OO_{\DR} )\\
\neq \, &\Delta^! [ \OO_{\DR} \times \OO_{\DR} ],
\end{split}
\end{equation}
exactly as claimed.
We are left to show the third equality in \eqref{e:3}.
Write
\[
\OO_{\DR^{\dagger}} = \pi^\ast \OO_{\DR} -\OO_{E} + \OO_{\DR^{\dagger} \cap E}.
\]
The desired equality follows easily from the following equations
\[
\begin{split}
   \pi_\ast( \OO_{\DR^{\dagger}} \cdot \OO_E) &= 
    \pi_\ast\OO_{\DR^{\dagger} \cap E} = \OO_p;
    \\
    \pi_\ast (\OO_{\DR^{\dagger}} \cdot \OO_{\DR^{\dagger} \cap E}) &= 0;
    \\
    \pi_\ast(\OO_E \cdot \OO_{\DR^{\dagger} \cap E}) &= \pi_\ast ( \pi^\ast \OO_p \cdot \OO_{\DR^{\dagger} \cap E} ) = \OO_p \cdot \OO_p = 0;
    \\
    \pi_\ast(\OO_{\DR^{\dagger} \cap E} \cdot \OO_{\DR^{\dagger} \cap E}) &= 0.
\end{split}
\]
The last three equations use the following simple observation.
For any locally free sheaf $V$, 
\[
V \cdot \OO_{\DR^{\dagger} \cap E} = \OO_{\DR^{\dagger} \cap E}^{\oplus {\rm rk}(V)},
\]
since $\DR^{\dagger} \cap E$ is a point.

\bibliographystyle{alpha}
\bibliography{zbib}

\end{document}